\newtheorem{thm}{Theorem}[section]
\newtheorem{defi}[thm]{Definition}
\newtheorem{prop}[thm]{Proposition}
\newtheorem{lem}[thm]{Lemma}
\newtheorem{eg}{Example}[section]
\newtheorem{rem}[thm]{Remark}
\def\pf{\noindent{\bf Proof.} }
\makeatletter \@addtoreset{equation}{section} \makeatother
\def\beq{\begin{equation}}
\def\eeq{\end{equation}}
\def\endproof{$\hfill\Box$}
\begin{document}
\baselineskip=20pt  \hoffset=-3cm \voffset=0cm \oddsidemargin=3.2cm
\evensidemargin=3.2cm \thispagestyle{empty}\vspace{10cm}

\hbadness=10000
\tolerance=10000
\hfuzz=150pt
\baselineskip=20pt  \hoffset=-3cm \voffset=0cm \oddsidemargin=3.2cm
\evensidemargin=3.2cm \thispagestyle{empty}\vspace{10cm}

\hbadness=10000
\tolerance=10000
\hfuzz=150pt
%\parskip 8pt
 %\vskip 2mm
\title{\textbf{Relative Morse index theory and  applications in wave equations}}
\author{\Large Qi Wang $^{{\rm a,b}}$,$\quad$  Li Wu $^{{\rm b}}$}
\date{} \maketitle
\begin{center}
\it\scriptsize ${}^{\rm a}$ School of Mathematics and Statistics, Henan University, Kaifeng 475000, PR China\\
${}^{\rm b}$Department of Mathematics, Shandong University, Jinan, Shandong, 250100, PR China\\
%${}^{\rm c}$Department of Mathematics, Shandong University, Jinan, Shandong, 250100, PR China\\
\end{center}

\footnotetext[0]{$^a${\bf Corresponding author.} Supported by NNSF of China(11301148) and PSF of China(188576).}
\footnotetext[0]{\footnotesize\;{\it E-mail address}: Q.Wang@vip.henu.edu.cn. (Qi Wang), nankai.wuli@gmail.com (Li Wu).}

%\date{} \maketitle

\noindent
{\bf Abstract:}  { We develop the relative Morse index theory for linear self-adjoint operator equation without compactness assumption and give the relationship between the index defined in \cite{Wang-Liu-2016} and \cite{Wang-Liu-2017}. Then we generalize the method of saddle point reduction and get some critical point theories by the index, topology degree and critical point theory.  As applications, we consider the existence and multiplicity of periodic solutions of wave equations.}

\noindent{\bf Keywords:} {\small}Relative Morse index; Periodic solutions; Wave equations

%\noindent{\bf MSC(2000):} {\small }

\section{Introduction}\label{section-introduction}
Many problems can be displayed as a self-adjoint operator equation
\[
 Au=F'(u),\;u\in D(A)\subset \mathbf H,\eqno{(O.E.)},
\]
where $\mathbf H$ is an infinite-dimensional separable Hilbert space, $A$ is a  self-adjoint operator on $\mathbf H$ with its domain $D(A)$, $F$ is a nonlinear functional on $\mathbf H$.
Such as boundary value problem for Laplace's equation on bounded domain, periodic solutions of Hamiltonian systems, Schr\"{o}dinger equation, periodic solutions of wave equation and so on.
By variational method, we know that the solutions of (O.E.) correspond to the critical points of a functional.
So we can transform the problem of finding the solutions of (O.E.) into the problem of finding the critical points of the functional.
 From 1980s, begin with Ambrosetti and Rabinowitz's famous work\cite{Ambrosetti-Rabinowitz-1973}(Mountain Pass Theorem), many crucial variational methods have been developed, such as Minimax-methods,  Lusternik-Schnirelman theory, Galerkin approximation methods, saddle point reduction methods, dual variational methods, convex analysis theory, Morse theory and so on (see\cite{Amann-1976},\cite{Amann-Zehnder-1980},\cite{Aubin-Ekeland-1984},\cite{Chang-1993},\cite{Ekeland-1990},\cite{Ekeland-Temam-1976} and the reference therein).

 We classified all of these variational problems into three kinds by the spectrum of $A$. For simplicity, denote by $\sigma(A)$, $\sigma_e(A)$ and $\sigma_d(A)$ the spectrum, the essential spectrum and the discrete finite dimensional point spectrum of $A$ respectively.

The first is $\sigma(A)=\sigma_d(A)$ and $\sigma(A)$ is bounded from below(or above), such as boundary value problem for Laplace's equation on bounded domain and periodic problem for second order Hamiltonian systems.
Morse theory can be used directly in this kind and  this is the simplest situation.

The second is $\sigma(A)=\sigma_d(A)$ and $\sigma(A)$ is unbounded from above and below, such as periodic problem for first order Hamiltonian systems.
In this kind, Morse theory cannot be used directly because in this situation the functionals are strongly indefinite and the Morse indices at the critical points of the functional are infinite. In order to overcome this difficulty, the index theory is worth to note here.  By the work  \cite{Ekeland-1984} of  Ekeland, an index theory for convex linear Hamiltonian systems was established.
By the works  \cite{Conley-Zehnder-1984,Long-1990,Long-1997,Long-Zehnder-1990} of Conley, Zehnder and Long, an index theory for symplectic paths was introduced.
These index theories have important and extensive applications, e.g \cite{Dong-Long-1997,Ekeland-Hofer-1985,Ekeland-Hofer-1987,Liu-Long-Zhu-2002,Long-Zhu-2000}.
In \cite{Zhu-Long-1999, Long-Zhu-2000-2} Long and Zhu defined spectral flows for paths of linear operators and redefined Maslov index for symplectic paths.
Additionally,  Abbondandolo defined the concept of relative Morse index theory for Fredholm operator with compact perturbation (see\cite{Abb-2001} and the references therein). In the study of the $L$-solutions (the solutions starting and ending at the same Lagrangian subspace $L$) of Hamiltonian systems, Liu in  \cite{Liu-2007} introduced an index theory for symplectic paths using the algebraic methods and gave some applications in
 \cite{ Liu-2007, Liu-2007-2}. This index had been generalized by Liu, Wang and Lin in  \cite{Liu-Wang-Lin-2011}.
In addition to the above index theories defined for specific forms, Dong in \cite{Dong-2010} developed an index theory for abstract operator equations (O.E.).

 The third  is $\sigma_e(A)\neq \emptyset$, the most complex situation. Since lack of compactness, many classical methods can not be used here. Specially, if $\sigma_e(A)\cap(-\infty,0)\neq \emptyset$ and $\sigma_e(A)\cap(0,\infty)\neq \emptyset$, Ding established a  series of critical points theories and applications in homoclinic orbits in Hamiltonian systems, Dirac equation,  Schr\"{o}dinger equation and so on, he named these problems???very strongly indefinite problems (see \cite{Ding-2007},\cite{Ding-2017}). Wang and Liu  defined the index theory ($i_A(B),\nu_A(B)$) for this kind and gave some applications in wave equation, homoclinic orbits in Hamiltonian systems and Dirac equation, the methods include dual variation and saddle point reduction(see \cite{Wang-Liu-2016} and\cite{Wang-Liu-2017}). Additionally, Chen and Hu in \cite{Chen-Hu-2007} defined the index for homoclinic orbits of Hamiltonian systems. Recently, Hu and Portaluri in \cite{Hu-Portaluri-2017} defined the index theory for heteroclinic orbits of Hamiltonian systems.\\

 In this paper, consider the kind of $\sigma_e(A)\neq \emptyset$. Firstly, we develop the relative Morse index theory.  Compared with Abbondandolo's work(\cite{Abb-2001}), we generalize the concept of relative Morse index $i^*_A(B)$ for Fredholm operator  without the compactness assumption on the  perturbation term(see Section \ref{section-relative Morse index}). And we gave the relationship between the relative Morse index $i^*_A(B)$ and the index  $i_A(B)$ defined in \cite{Wang-Liu-2016} and\cite{Wang-Liu-2017}. The bridge between them is the concept of spectral flow. As far as we know, the spectral flow is introduced by Atiyah-Patodi-Singer(see\cite{Atiyah-Patodi-Singer-1976}). Since then, many interesting properties and applications of spectral flow have been subsequently established(see\cite{Cappell-Lee-Miller-1994},\cite{Floer-1988},\cite{Robbin-Salamon-1993},\cite{Robbin-Salamon-1995} and \cite{Zhu-Long-1999}).
 %Chen and Hu defined the index for homoclinic orbits in \cite{Chen-Hu-2007}. Hu and Portaluri defined the index for heteroclinic orbits in \cite{Hu-Portaluri-2017}.

Secondly, we generalize the method of saddle point reduction and get some critical point theories. With the relative Morse index defined above, we will establish some new abstract critical point theorems by saddle point reduction, topology degree and Morse theory, where we do not need the nonlinear term to be $C^2$ continuous(see Section \ref{section-saddle point reduction}).

Lastly, as applications, we consider the existence and multiplicity of the periodic solutions for wave equation and give some new results(sec Section \ref{section-applications}). To the best of the authors' knowledge, the problem of finding periodic solutions of nonlinear wave equations has attracted much attention since 1960s. Recently, with critical point theory, there are many results on this problem.
For example, Kryszewski and Szulkin in \cite{Kryszewski-Szulkin-1997} developed an infinite dimensional cohomology theory and the corresponding Morse theory,
with these theories, they  obtained the existence of nontrivial periodic solutions of one dimensional wave equation.
Zeng, Liu and Guo in \cite{Zeng-Liu-Guo-2004}, Guo and Liu  in \cite{Gou-Liu-2007}  obtained the existence and multiplicity of nontrivial periodic solution of one dimensional wave equation
and beam equation by their Morse index theory developed in \cite{Guo-Liu-Zeng-2004}. Tanaka in \cite{Tanaka-2006}  obtained the existence of nontrivial periodic solution of
one dimensional wave equation by linking methods. Ji and Li in \cite{Ji-Li-2006} considered the periodic solution of one dimensional wave equation with $x$-dependent coefficients.
  By minimax principle, Chen and Zhang in \cite{Chen-Zhang-2014} and \cite{Chen-Zhang-2016}  obtained infinitely many symmetric periodic solutions
of $n$-dimensional wave equation. Ji in \cite{Ji-2018} considered the periodic solutions for one dimensional wave equation with bounded nonlinearity and $x$-dependent coefficients.

\section{Relative Morse Index $i^*_A(B)$ and the  relationship with $i_A(B)$}\label{section-relative Morse index}
Let $\mathbf H$ be an infinite dimensional separable Hilbert space with inner product $(\cdot,\cdot)_\mathbf H$ and norm $\|\cdot\|_\mathbf H$.
Denote by $\mathcal O(\mathbf H)$ the set of all linear self-adjoint operators on $\mathbf H$. For $A\in \mathcal O(\mathbf H)$, we denote by
$\sigma(A)$ the spectrum of $A$ and $\sigma_e(A)$ the essential spectrum of $A$. We define a subset of $\mathcal O( \mathbf H)$ as follows
\[
\mathcal O^0_e(a,b)=\{A\in \mathcal O(\mathbf H)|\;\sigma_e(A)\cap(a,b)=\emptyset \;{\rm and}\;\sigma(A)\cap (a, b)\ne \emptyset\}.
\]
Denote $\mathcal{L}_s(\mathbf H)$ the set of all  linear bounded self-adjoint operators on $\mathbf H$ and  a subset of $\mathcal{L}_s(\mathbf H)$ as follows
 \begin{equation}\label{eq-L0}
 \mathcal{L}_s(\mathbf H,a,b)=\{B\in \mathcal{L}_s(\mathbf H), \;a\cdot I<B< b\cdot I\},
\end{equation}
where $I$ is the identity map on $\mathbf H$, $B< b\cdot I$ means that there exists $\delta>0$ such that $(b-\delta)\cdot I-B$ is positive define,
 $B> a\cdot I$  has  the similar meaning. For any $B\in\mathcal{L}_s(\mathbf H,a,b)$, we have the index pair ($i_A(B),\nu_A(B)$)(see \cite{Wang-Liu-2016,Wang-Liu-2017} for details).
 In this section, we will define the relative Morse index  $i^*_A(B)$ and give the relationship with $i_A(B)$.

\subsection{Relative Morse Index $i^*_A(B)$}
As the beginning of this subsection, we will give a brief introduction of relative Morse index.  The relative Morse index can be derived in different ways (see\cite{Abb-2001,Chang-Liu-Liu-1997,Fei-1995,Zhu-Long-1999}). Such kinds of indices have been extensively studied in dealing with periodic orbits of first order Hamiltonian systems. As far as authors known, the existing relative Morse index theory can be regarded as compact perturbation for Fredholm operator. Assume $A$ is a self-adjoint Fredholm  operator on Hilbert space $\mathbf H$, with the  orthogonal splitting
\begin{equation}\label{eq-decomposition of space H}
\mathbf H=\mathbf H^-_A\oplus \mathbf H^0_A\oplus \mathbf H^+_A,
\end{equation}
where $A$ is negative, zero and positive definite on $\mathbf H^-_A,\;\mathbf H^0_A$ and $\mathbf H^+_A$ respectively. Let $P_A$ denote the orthogonal projection from $\mathbf H$ to $\mathbf H^-_A$. If the perturbation term $F$ is a compact self-adjoint operator on $\mathbf H$, then we have $P_A-P_{A-F}$ is compact and  $P_A:\mathbf H^-_{A-F}\to \mathbf H^-_A$ is a Fredholm operator and we can define the so called relative Morse index by the Fredholm index of $P_A:\mathbf H^-_{A-F}\to \mathbf H^-_A$.

Generally, if the operator $A$ is not Fredholm operator or the perturbation $F$ is not compact, $P_A:\mathbf H^-_{A-F}\to \mathbf H^-_A$ will not be Fredholm operator and the concept of relative Morse index will be meaningless,
but if the perturbation lies in the gap of $\sigma_{e}(A)$, that is to say $A\in \mathcal O^0_e(\lambda_a,\lambda_b)$ for some $\lambda_a,\lambda_b\in\mathbb{R}$ and the perturbation $B\in \mathcal{L}_s(\mathbf H,\lambda_a,\lambda_b)$, we can also defined the relative Morse index $i^*_A(B)$ and give the relationship with the index $i_A(B)$ defined in \cite{Wang-Liu-2017}. Firstly, we need two abstract lemmas.

\begin{lem} \label{Fredholm projection}
 Let $A:\mathbf H\rightarrow\mathbf H$ be a bounded self-adjoint operator.
 Let $W,V$ be closed spaces of $\mathbf H$.
 Denote the orthogonal projection $\mathbf H\rightarrow Y$ by $P_Y$ for any closed linear subspace $Y$ of $\mathbf H$. Assume that\\
\noindent (1).  $(Ax,x)_\mathbf H<-\epsilon_1 \|x\|^2_\mathbf H,\;\forall x\in W\backslash\{0\}$, with some constant $\epsilon_1>0$,\\
\noindent (2). $(Ax,x)_\mathbf H> 0,\; \forall x\in V^{\bot}\backslash\{0\} $,\\
\noindent (3). $ (Ax,y)_\mathbf H=0, \forall x\in V,y\in V^\bot$.\\
Then $P_V|_{W}$ is an injection and $P_V(W)$ is a closed  subspace of $\mathbf H$.
Furthermore, if we assume \\
\noindent (4). $(Ax,x)_\mathbf H\leq 0,\; \forall x\in V\backslash\{0\}$,\\
and there is a closed subspace $U$ of $W^\bot$ such that\\
\noindent (5).  $W^\bot/U$ is  finite dimensional,\\
\noindent (6).  $(Ax,x)_\mathbf H>0,\;\forall x\in U\setminus \{0\}$. \\
Then $P_V:W\rightarrow V$ and $P_W:V\rightarrow W$ are both  Fredholm operators and
\[
{\rm ind}(P_W:V\rightarrow W)=-{\rm ind}(P_V:W\rightarrow V).
\]
\end{lem}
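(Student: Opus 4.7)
The plan is to exploit hypothesis (3), which states that $A$ preserves the orthogonal splitting $\mathbf H = V \oplus V^\perp$; consequently, $AV\subseteq V$, $AV^\perp \subseteq V^\perp$, and for every $x \in \mathbf H$,
\[
(Ax,x)_{\mathbf H} = (A P_V x, P_V x)_{\mathbf H} + (A P_{V^\perp} x, P_{V^\perp} x)_{\mathbf H}.
\]
For $x \in W$, (1) gives $(Ax,x)_{\mathbf H}\leq -\epsilon_1\|x\|^2_{\mathbf H}$ and (2) gives $(A P_{V^\perp}x, P_{V^\perp}x)_{\mathbf H}\geq 0$, hence $(A P_V x, P_V x)_{\mathbf H} \leq -\epsilon_1\|x\|^2_{\mathbf H}$. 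Combined with the trivial bound $(A P_V x, P_V x)_{\mathbf H} \geq -\|A\|\,\|P_V x\|^2_{\mathbf H}$, this yields the coercivity estimate $\|P_V x\|^2_{\mathbf H} \geq (\epsilon_1/\|A\|)\,\|x\|^2_{\mathbf H}$ (the case $W=\{0\}$ being trivial). It immediately implies $P_V|_W : W \to V$ is injective with closed range, which is the first assertion.

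For the Fredholm part I would identify kernels and cokernels. The coercivity already gives $\ker(P_V|_W) = W \cap V^\perp = \{0\}$. A vector $y \in V$ is $\mathbf H$-orthogonal to $P_V(W)$ iff $y \perp W$, so $\mathrm{coker}(P_V|_W) \cong W^\perp \cap V$. Thus $P_V|_W$ is Fredholm as soon as $\dim(W^\perp \cap V) < \infty$, of index $-\dim(W^\perp \cap V)$. A direct computation shows that under the canonical identifications of $W$ and $V$ as Hilbert subspaces of $\mathbf H$ one has $(P_V|_W)^* = P_W|_V$, whence $P_W|_V$ will automatically be Fredholm of opposite index, with $\ker(P_W|_V)=V\cap W^\perp$ and $\mathrm{coker}(P_W|_V)\cong W\cap V^\perp=\{0\}$. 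Everything reduces to proving $\dim(W^\perp \cap V) < \infty$.

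This is where (4)--(6) enter. I would consider the restriction to $W^\perp \cap V$ of the quotient map $q : W^\perp \to W^\perp/U$. Its kernel equals $V \cap U$; any nonzero $x$ there would satisfy $(Ax,x)_{\mathbf H}\leq 0$ by (4) and $(Ax,x)_{\mathbf H}>0$ by (6), a contradiction, so $V\cap U = \{0\}$. Hence $q$ restricts to an injection $W^\perp \cap V \hookrightarrow W^\perp/U$, and by (5) the target is finite-dimensional, giving $\dim(W^\perp \cap V) \leq \dim(W^\perp/U) < \infty$. The main obstacle is the coercivity step in the first paragraph: turning the pointwise sign conditions into a uniform lower bound on $\|P_V x\|_{\mathbf H}$ depends essentially on (3), which kills the cross terms in the $A$-quadratic form and isolates the $V$-component of that form on $W$. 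Once this estimate is in place, the Fredholm bookkeeping and the finiteness of $\dim(W^\perp \cap V)$ become routine linear algebra, and the index identity follows from the adjoint relation.
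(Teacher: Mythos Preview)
Your argument is correct and follows the paper's proof almost verbatim through the coercivity estimate $\|P_Vx\|_{\mathbf H}^2\ge(\epsilon_1/\|A\|)\|x\|_{\mathbf H}^2$, the identification $\mathrm{coker}(P_V|_W)\cong W^\perp\cap V$, and the finiteness of $\dim(W^\perp\cap V)$ via $V\cap U=\{0\}$ and $\dim(W^\perp/U)<\infty$. The only substantive divergence is in the endgame: you invoke the adjoint identity $(P_V|_W)^*=P_W|_V$ to get the Fredholmness of $P_W|_V$ and the index relation in one stroke, whereas the paper instead uses the coercivity bound to show $\|(I-P_V)|_W\|<1$, hence $P_WP_V:W\to W$ is invertible, and then reads off surjectivity of $P_W|_V$, finiteness of its kernel, and $\mathrm{ind}(P_W|_V)+\mathrm{ind}(P_V|_W)=\mathrm{ind}(P_WP_V)=0$ from the composition formula. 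Your adjoint route is cleaner; the paper's route is more hands-on and avoids any appeal to the Fredholm--adjoint formalism. Both rest on exactly the same structural ingredients.
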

\begin{proof}
Note that $\ker P_V|_{W}= \ker P_V\cap W =V^\bot \cap W$.
From condition (1) and (2), we have  $V^\bot \cap W=\{0\}$, so $P_V|_{W}$ is an injection.
For $x\in W$, from condition (2) and (3), we have
\begin{align*}
-\|A\|\|P_Vx\|^2_\mathbf H&\leq(AP_V x,P_V x)_\mathbf H\\
                          &=(Ax,x)_\mathbf H-(A(I-P_V)x,(I-P_V)x)_\mathbf H\\
                          &\leq (Ax,x)_\mathbf H\\
                          &< -\epsilon_1 \|x\|^2_\mathbf H
\end{align*}
It follows that
\begin{equation}\label{eq-relative Morse index-3}
\|P_V x\|_\mathbf H\ge \sqrt{\frac{\epsilon_1}{\|A\|}}\|x\|_\mathbf H,\;\forall x\in W,
\end{equation}
so  $P_V(W)$ is a closed subspace of $\mathbf H$.

For any $x\in (P_V(W))^\bot \cap V $, that is to say $x\bot P_V(W)$ and $x\bot (I-P_V)(W)$, so we have $x\bot W$ and
\begin{equation}\label{eq-relative Morse index-1}
P_V(W)^\bot \cap V \subset W^\bot.
\end{equation}
From condition (4) and (6),
\begin{align}\label{eq-relative Morse index-2}
((P_V(W))^\bot \cap V) \cap U&\subset V\cap U\nonumber\\
                             & =\{0\}.
\end{align}
From \eqref{eq-relative Morse index-1}, \eqref{eq-relative Morse index-2} and condition (5),  $(P_V(W))^\bot \cap V$ is finite dimensional.
It follows that $P_V:W\rightarrow V$ is a Fredholm operator.
From \eqref{eq-relative Morse index-3}, we have
\begin{align*}
\|(I-P_V)x\|^2&=\|x\|^2-\|P_V x\|^2\\
              &\leq (1-\epsilon_1/\|A\|)\|x\|^2,\;\forall x\in W.
\end{align*}
It follows that $\|I-P_V|_W\|<1$.
So the operator $P_WP_V=P_W-P_W(I-P_V):W\rightarrow W$ is invertible.
It follows that $P_W:V\rightarrow W$ is surjective, and
\begin{equation}\label{eq-relative Morse index-4}
\ker P_W\cap P_V(W)={0}.
\end{equation}
Note that $V$ has the following decomposition
\[
V=P_V(W)\bigoplus ((P_V(W))^\bot \cap V),
\]
from \eqref{eq-relative Morse index-4} and  $\dim((P_V(W))^\bot \cap V)<\infty$, we have $\ker P_W \cap V$ is finite dimensional.
So the operator $P_W:V\rightarrow W$ is a Fredholm operator.
Since $P_WP_V:W\rightarrow W$ is invertible,  we have
\begin{align*}
0&={\rm ind}(P_WP_V:W\rightarrow W)\\
 &={\rm ind}(P_W:V\rightarrow W)+{\rm ind}(P_V:W\rightarrow V).
\end{align*}
Thus we have proved the lemma.
\end{proof}
\begin{lem}\label{finite_pertubation}
Let $V_1\subset V_2$,$ W_1\subset W_2$ be linear closed subspaces of $\mathbf H$ such that $V_2/V_1$ and $W_2/W_1$ are finite dimensional linear spaces.
Let $P_{V_i}$, $P_{W_j}$ be the orthogonal projections onto $V_i$ and $W_j$ and respectively, $i,j=1,2$.
Assume that $P_{W_{j^*}}:V_{i^*}\rightarrow W_{j^*}$ is a Fredholm operator for some fixed $i^*,j^*\in \{1,2\}$.
Then   $P_{W_j}:V_i \rightarrow W_j$, $i,j=1,2$ are all Fredholm operators.
Furthermore, we have
\begin{align*}
{\rm ind}(P_{W_j}:V_i\rightarrow W_j)=&{\rm ind}(P_{V_{i^*}}:V_i\rightarrow V_{i^*})+{\rm ind}(P_{W_{j^*}}:V_{i^*}\rightarrow W_{j^*})\\
&+{{\rm ind}}(P_{W_j}:W_{j^*}\rightarrow W_j).
\end{align*}
\end{lem}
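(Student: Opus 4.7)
My plan is to realize $P_{W_j}|_{V_i}$ as a composition of Fredholm operators through the ``pivot'' pair $(V_{i^*},W_{j^*})$, up to a finite-rank correction. The factorization ansatz is
\[
P_{W_j}|_{V_i} \;=\; \bigl(P_{W_j}|_{W_{j^*}}\bigr)\circ\bigl(P_{W_{j^*}}|_{V_{i^*}}\bigr)\circ\bigl(P_{V_{i^*}}|_{V_i}\bigr) + K,
\]
and the first step is to check that each factor on the right is Fredholm. The middle factor is Fredholm by hypothesis. The outer factors are Fredholm because $V_2/V_1$ and $W_2/W_1$ are finite dimensional: depending on the direction of the inclusion, $P_{V_{i^*}}|_{V_i}$ is either the embedding of $V_i$ as a closed subspace of finite codimension in $V_{i^*}$, or an orthogonal projection with finite-dimensional kernel $V_i\cap V_{i^*}^{\bot}$, and the same analysis applies to $P_{W_j}|_{W_{j^*}}$.

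The next step is to identify $K$. The algebraic identity
\[
I - P_{W_{j^*}}P_{V_{i^*}} \;=\; (I-P_{V_{i^*}}) + (I-P_{W_{j^*}})P_{V_{i^*}}
\]
yields, after left-multiplication by $P_{W_j}$, a natural decomposition of $K$ into two summands. For the first summand, $P_{W_j}(I-P_{V_{i^*}})|_{V_i}$ vanishes when $V_i\subset V_{i^*}$ and has range contained in the finite-dimensional space $V_i\cap V_{i^*}^{\bot}$ otherwise. For the second summand, $P_{W_j}(I-P_{W_{j^*}})$ equals $P_{W_j}-P_{W_{j^*}}$ (with range in the finite-dimensional space $W_j\cap W_{j^*}^{\bot}$) when $W_{j^*}\subset W_j$, and vanishes identically when $W_j\subset W_{j^*}$, since $P_{W_j}P_{W_{j^*}}=P_{W_j}$ in that case. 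In every case $K$ is finite rank, in particular compact.

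The conclusion is then immediate from two standard facts: a compact perturbation of a Fredholm operator is Fredholm with the same index, and the Fredholm index is additive under composition. Applying the factorization to any $(i,j)\in\{1,2\}^2$ shows that $P_{W_j}|_{V_i}$ is Fredholm with index equal to the sum of the three indices appearing in the statement. The one delicate point I anticipate is the finite-rank verification of $K$; however, once the algebraic identity above is on paper, the case analysis is entirely driven by the one-way containments $V_1\subset V_2$ and $W_1\subset W_2$, so no genuine analytic difficulty arises.
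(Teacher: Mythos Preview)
Your argument is correct and follows essentially the same route as the paper: write $P_{W_j}|_{V_i}$ as the triple composition $P_{W_j}P_{W_{j^*}}P_{V_{i^*}}|_{V_i}$ plus a compact correction, then invoke stability of the Fredholm index under compact perturbation and additivity under composition. The only cosmetic difference is in how the correction $K$ is shown to be finite rank: the paper observes directly that $P_{W_j}-P_{W_{j^*}}$ and $P_{V_i}-P_{V_{i^*}}$ are finite rank (each is $\pm$ the projection onto $W_2\ominus W_1$, resp.\ $V_2\ominus V_1$), hence $P_{W_j}P_{V_i}-P_{W_{j^*}}P_{V_{i^*}}$ is compact without any case split, whereas you run the same computation via the identity $I-P_{W_{j^*}}P_{V_{i^*}}=(I-P_{V_{i^*}})+(I-P_{W_{j^*}})P_{V_{i^*}}$ and a short case analysis on the inclusions. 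One tiny wording slip: the range of $P_{W_j}(I-P_{V_{i^*}})|_{V_i}$ is $P_{W_j}(V_i\cap V_{i^*}^{\bot})$, not $V_i\cap V_{i^*}^{\bot}$ itself, but of course it is still finite dimensional.
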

\begin{proof}
Since  $V_2/V_1$ and $W_2/W_1$ are finite dimensional linear spaces, $P_{W_j}-P_{W_{j^*}}$ and $P_{V_i}-P_{V_{i^*}}$ are both compact operator. So $P_{W_j}P_{V_i}-P_{W_{j^*}}P_{V_{i^*}}$ is also compact operator.
%Denote $P_{W_i}:V_j\rightarrow W_i$ by $P(V_j,W_i)$.
Note that on $V_i$,
%$P(V_k,W_i)-P(W_2,W_i)P(V_2,W_2)P(V_k,V_2)=P_{W_i}-P_{W_i}P_{W_2}P_{V_2}$
\[
(P_{W_j}-P_{W_j}P_{W_{j^*}}P_{V_{i^*}})|_{V_i}=P_{W_j}(P_{W_j}P_{V_i}-P_{W_{j^*}}P_{V_{i^*}})|_{V_i}.
\]
It follows that $P_{W_j}-P_{W_j}P_{W_{j^*}}P_{V_{i^*}}:V_i\rightarrow W_j$ is compact.
Then we can conclude that
\begin{align*}
{{\rm ind}}(P_{W_j}:V_i\rightarrow W_j)=&{\rm ind}(P_{W_j}P_{W_{j^*}}P_{V_{i^*}}:V_i\rightarrow W_j)\\
                                       =&{\rm ind}(P_{V_{i^*}}:V_i\rightarrow V_{i^*})+{\rm ind}(P_{W_{j^*}}:V_{i^*}\rightarrow W_{j^*})\\
                                        &+{{\rm ind}}(P_{W_j}:W_{j^*}\rightarrow W_j).
\end{align*}
We have proved the lemma.
\end{proof}

With these two lemmas, we can define the relative Morse index.  We consider a normal type that is  $A\in \mathcal O^0_e(-1,1)$ for simplicity.
Let $B\in \mathcal{L}_s(\mathbf H,-1,1)$ with its norm $\|B\|=c_B$, so we have $0\leq c_B<1$. Then $A-tB$ is a self-adjoint Fredholm operator for $t\in [0,1]$. We  have  $\sigma_{ess}(A-tB)\cap (-1+tc_B,1-tc_B)=0$. Let $E_{A-tB}(z)$ be the spectral measure of $A-tB$. Denote
\begin{equation}\label{eq-projection splitting}
P(A-tB,U)= \int_{U} dE_{A-tB}(z),
\end{equation}
 with $U\subset \mathbb{R}$, and rewrite it as $P(t,U) $ for simplicity. Let
 \[
 V(A-tB,U)={\rm im} P(t,U)
 \]
  and rewrite it as $V(t,U)$ for simplicity. For any $c_0\in\mathbb{R}$ satisfying $c_B<c_0<1$,  we have
 \[
 ((A-B)x,x)_\mathbf H>(c_0-c_B)\|x\|^2_\mathbf H,\;x\in V(0,(c_0,+\infty))\cap D(A).
 \]
So there is $\epsilon >0$, such that
\begin{equation}\label{eq-relative Morse index-5}
((A-B)x,x)_\mathbf H>\epsilon ((|A-B|+I)x,x)_\mathbf H ,\forall x\in V(0,(c_0,+\infty))\cap D(A)
\end{equation}
Similarly, we have
\begin{equation}\label{eq-relative Morse index-6}
((A-B)x,x)_\mathbf H<-\epsilon ((|A-B|+I)x,x)_\mathbf H ,\forall x\in V(0,(-\infty,-c_0))\cap D(A)
\end{equation}
Denote
\[
P_{s,a}^{t,b}:=P(t,(-\infty,b))|_{V(s,(-\infty,a))},\forall t,s\in [0,1]\;{\rm and}\;a,b\in \mathbb{R}.
\]
Clearly, we have $P(t,(-\infty,b))=P_{s,+\infty}^{t,b}$ , $\forall s\in [0,1] $.
\begin{lem}\label{inverse_relation}
For any $a\in[-c_0,c_0]$, the map $P_{0,a}^{1,0}$ $P_{1,0}^{0,a}$ are both Fredholm operators.
 Furthermore, we have ${\rm ind}(P_{0,a}^{1,0})=-{\rm ind}(P_{1,0}^{0,a})$.
\end{lem}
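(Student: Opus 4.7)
The plan is to apply Lemma \ref{Fredholm projection} in the clean base case $a = -c_0$, and then propagate to all $a \in [-c_0, c_0]$ using Lemma \ref{finite_pertubation}. Throughout, let $\tilde A := (A-B)(|A-B|+I)^{-1}$, which is a bounded self-adjoint operator on $\mathbf H$ sharing all spectral projections with $A-B$, because $\lambda \mapsto \lambda/(|\lambda|+1)$ is a strictly increasing odd homeomorphism of $\mathbb R$.

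For the base case, apply Lemma \ref{Fredholm projection} with $W = V(0, (-\infty, -c_0))$, $V = V(1, (-\infty, 0))$, $U = V(0, (c_0, +\infty))$, and auxiliary operator $\tilde A$. Conditions (3) and (4) are immediate since $V$ is precisely the negative spectral subspace of $\tilde A$. Condition (5) holds because $\sigma_e(A) \cap (-1, 1) = \emptyset$ forces $\sigma(A) \cap [-c_0, c_0]$ to be a finite set of eigenvalues of finite multiplicity, so $W^\perp/U \cong V(0, [-c_0, c_0])$ is finite dimensional. Conditions (1) and (6) come from transferring the $A$-spectral estimates $((A-B)x,x) \le -(c_0-c_B)\|x\|^2$ on $W$ and $((A-B)x,x) \ge (c_0-c_B)\|x\|^2$ on $U$ to strict sign-definiteness of $\tilde A$, using the gap inequalities \eqref{eq-relative Morse index-5} and \eqref{eq-relative Morse index-6} together with the functional calculus of $A-B$. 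Lemma \ref{Fredholm projection} then shows that $P_{0,-c_0}^{1,0}$ and $P_{1,0}^{0,-c_0}$ are Fredholm with ${\rm ind}(P_{0,-c_0}^{1,0}) = -{\rm ind}(P_{1,0}^{0,-c_0})$.

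For arbitrary $a \in [-c_0, c_0]$, the inclusion $V(0, (-\infty, -c_0)) \subset V(0, (-\infty, a))$ has finite-dimensional quotient $V(0, [-c_0, a])$ of some dimension $d_a$, by the same essential-spectrum argument. Two applications of Lemma \ref{finite_pertubation} (once with the nested pair on the source and $W_1 = W_2 = V(1,(-\infty,0))$, once with the nested pair on the target and $V_1 = V_2 = V(1,(-\infty,0))$) give
\begin{align*}
{\rm ind}(P_{0,a}^{1,0}) &= {\rm ind}(P_{0,-c_0}^{1,0}) + d_a, \\
{\rm ind}(P_{1,0}^{0,a}) &= {\rm ind}(P_{1,0}^{0,-c_0}) - d_a,
\end{align*}
whose sum vanishes by the base case, proving the desired identity.

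The main obstacle is the transfer of sign-definiteness to $\tilde A$ for conditions (1) and (6): the bounds on $((A-B)x,x)$ are phrased via the spectral subspace of $A$, whereas $\tilde A$'s spectral information is organized through $A-B$. The inequalities \eqref{eq-relative Morse index-5} and \eqref{eq-relative Morse index-6}, which directly compare $((A-B)x,x)$ to $((|A-B|+I)x,x)$ on exactly these subspaces, are designed to execute this transfer uniformly and yield the needed $\epsilon_1 > 0$.
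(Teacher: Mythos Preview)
Your approach is essentially identical to the paper's: apply Lemma~\ref{Fredholm projection} with the bounded operator $(A-B)(|A-B|+I)^{-1}$, the subspaces $W=V(0,(-\infty,-c_0))$, $U=V(0,(c_0,+\infty))$, and a suitable $V$ coming from the spectral decomposition of $A-B$, then invoke Lemma~\ref{finite_pertubation} to pass to general $a\in[-c_0,c_0]$. Your explicit computation of the index shifts $\pm d_a$ in the second step is more detailed than the paper's one-line appeal to Lemma~\ref{finite_pertubation}, but the content is the same.

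There is one genuine slip in the base case. You take $V=V(1,(-\infty,0))$ and then verify conditions (1), (3), (4), (5), (6) of Lemma~\ref{Fredholm projection} --- but you never check condition (2), and with your choice of $V$ it can fail: $V^\perp=V(1,[0,+\infty))$ contains $\ker(A-B)$, on which $\tilde A$ vanishes, so the strict inequality $(\tilde A x,x)>0$ need not hold. The paper circumvents this by choosing $V=V(1,(-\infty,0])$ instead; then $V^\perp=V(1,(0,+\infty))$ and condition (2) is immediate. The price is that Lemma~\ref{Fredholm projection} then yields Fredholmness of the projection onto $V(1,(-\infty,0])$ rather than onto $V(1,(-\infty,0))$, but since $\ker(A-B)$ is finite-dimensional this is repaired by one more application of Lemma~\ref{finite_pertubation} (or by noting the finite-rank difference directly). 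Alternatively you could observe that the proof of Lemma~\ref{Fredholm projection} only uses $(\tilde A x,x)\ge 0$ on $V^\perp$, so the strict inequality in condition (2) is not actually needed; either fix closes the gap.
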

\begin{proof}
 From \eqref{eq-relative Morse index-5} and \eqref{eq-relative Morse index-6}, there is $\epsilon>0$ such that
  \[
  ((A-B)(|A-B|+I)^{-1}x,x)>\epsilon \|x\|^2,\;\forall x\in V(0,(c_0,+\infty)),
 \]
 and
 \[
  ((A-B)(|A-B|+I)^{-1}x,x)<-\epsilon \|x\|^2,\;\forall x\in V(0,(-\infty,-c_0)).
 \]
 Now, let the operator $(A-B)(|A-B|+I)^{-1}$, the spaces $V(0,(-\infty,-c_0))$, $V(1,(-\infty,0])$ and $V(0,(c_0,+\infty))$ be the operator $A$ and the spaces $W,V$ and $U$ in Lemma \ref{Fredholm projection} correspondingly.
It's easy to verify that condition (1), (2), (3), (4) and (6) are satisfied, and since $A\in \mathcal O^0_e(-1,1)$, $V(0,[-c_0,c_0])$ is finite dimensional, so condition (5) is  satisfied. Then $P_{0,-c_0}^{1,0} $ and $P_{1,0}^{0,-c_0} $ are both Fredholm operators.
We also have
\[
{\rm ind}(P_{0,-c_0}^{1,0})=-{\rm ind}(P_{1,0}^{0,-c_0}).
 \]
By Lemma \ref{finite_pertubation}, $ P_{0,a}^{1,0}$ and $P_{1,0}^{0,a}$ are both Fredholm operators with $a\in [-c_0,c_0]$, and we have
\[
 {\rm ind}(P_{0,a}^{1,0})=-{\rm ind}(P_{1,0}^{0,a}), a\in [-c_0,c_0].
  \]
\end{proof}

\begin{rem}\label{inverse_relation(general result)}
 Generally, we have $P_{s,a}^{t,b}\; and\; P_{t,b}^{s,a}$ are both  Fredholm operators
 with $a\in (-1+sc_B,1-sc_B)$, $b\in (-1+tc_B,1-tc_B)$ and we have
 \[
 {{\rm ind}}(P_{s,a}^{t,b})=-{{\rm ind}}(P_{t,b}^{s,a}).
 \]
 \end{rem}
Here we replace $A,B$ by $A'=A-sB$ and $B'=(t-s)B$ respectively in Lemma \ref{inverse_relation}, then all the proof will be same, so we omit the proof here.

\begin{defi}\label{defi-index defined by relative Morse index}
Define the relative Morse index by
\[
i^*_A(B):={\rm ind}(P^{0,0}_{1,0}),\;\forall B\in\mathcal{L}_s(\mathbf H,-1,1).
\]
\end{defi}

\subsection{The relationship between $i^*_A(B)$ and $i_A(B)$}
Now, we will prove that $i^*_A(B)=i_A(B)$ by the concept of spectral flow. We need some preparations. There are some equivalent definitions of spectral flow. We use the Definition 2.1, 2.2 and 2.6 in \cite{Zhu-Long-1999}.
Let $A_s$ be a path of self-adjoint Fredholm operators.
The APS projection  of $A_s$ is defined by $Q_{A_s}=P(A_s,[0,+\infty))$ .
Recall that locally, the spectral flow of $A_s$  is  the s-flow of $Q_{A_s}$.
Choose $\epsilon>0$ such that $V(A_{s_0},[0,+\infty))=V(A_{s_0},[-\epsilon,+\infty))$.
Then $\epsilon \notin \sigma (A_{s_0})$.
Let $P_{A_s}=P(A_s,[-\epsilon,+\infty))$.
Then there is $\delta>0$ such that $P_{A_s}$ is continuous on $(s_0-\delta,s_0+\delta)$
and $P_{A_s}-Q_{A_s}$ is compact for $s\in(s_0-\delta,s_0+\delta)$ .
The the s-flow of $Q_{A_s}$ on $[s_0,b]\subset (s_0-\delta,s_0+\delta)$  can be calculated as
\begin{align*}
sfl(Q_{A_s},[s_0,b])=& -{\rm ind}(P_{A_{s_0}}:V(A_{s_0},[0,\infty)\to V(A_{s_0},[0,\infty))\\
&+{\rm ind}(P_{A_{s_b}}:V(A_{s_b},[0,\infty)\to V(A_{s_b},[-\epsilon,\infty))\\
=&-{\rm dim}(V(A_{s_b},[-\epsilon,0)))\\
=&{\rm ind}({\rm Id}-P_{A_{s_b}}:V(A_{s_b},(-\infty,-\epsilon)\to V(A_{s_b},(-\infty,0)).
\end{align*}
If  $A_s=A-sB$, with $\epsilon$ and $\delta$ chosen like above,
we have $sf\{A-sB,[s_0,s_1]\}={\rm ind}P_{s_1,-\epsilon}^{s_1,0}$ for $[s_1,s_2]\subset [s_1,s_1+\delta]$.

\begin{lem}\label{continuity}
Let $t_0\in [0,1]$.
Let $a\in (-1+t_0c_B,1-t_0c_B)\backslash\sigma(A-t_0B) $.
Then we have
\[
 \lim_{s\to t_0}\|P_{t_0,a}^{t,0}-P_{s,a}^{t,0}P_{t_0,a}^{s,a}\|=0,\forall t\in[0,1].
\]
and
\begin{align*}
\lim_{s\to t_0}{{\rm ind}}(P_{s,a}^{t,0}))={{\rm ind}}(P_{t_0,a}^{t,0})\\
\lim_{s\to t_0}{{\rm ind}}(P_{t,0}^{s,a}))={{\rm ind}}(P_{t,0}^{t_0,a}).
\end{align*}
\end{lem}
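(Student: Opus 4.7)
The plan is to extract both claims from a single analytic input, namely that the spectral projection $P(s,(-\infty,a))$ depends continuously on $s$ in operator norm at $s=t_0$:
\[
\lim_{s\to t_0}\bigl\|P(s,(-\infty,a))-P(t_0,(-\infty,a))\bigr\|=0. \qquad(\star)
\]
This will hold because $B$ is bounded, so $s\mapsto A-sB$ is norm-resolvent continuous with the explicit estimate $\|(A-sB-z)^{-1}-(A-t_0B-z)^{-1}\|\le |s-t_0|\,\|B\|/\mathrm{dist}(z,\sigma(A-sB))^2$, and because $a\notin\sigma(A-t_0B)$ produces a spectral gap that survives for $s$ near $t_0$ by upper semicontinuity of the spectrum; these two ingredients combined with the standard contour-integral (Riesz / Helffer-Sjöstrand) representation of the gap projection give $(\star)$.

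Once $(\star)$ is in hand, the first claim is pure projection algebra. Abbreviate $P_s:=P(s,(-\infty,a))$ and $Q_t:=P(t,(-\infty,0))$. For any $x\in V(t_0,(-\infty,a))$ we have $P_{t_0}x=x$, so
\[
\bigl(P_{t_0,a}^{t,0}-P_{s,a}^{t,0}\,P_{t_0,a}^{s,a}\bigr)x \;=\; Q_tx-Q_tP_sx \;=\; Q_t(P_{t_0}-P_s)x,
\]
and the operator-norm bound $\|Q_t\|\le 1$ together with $(\star)$ yields the first displayed limit.

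For the index-continuity statements, I would interchange the roles of $s$ and $t_0$: for $y\in V(s,(-\infty,a))$,
\[
P_{s,a}^{t,0}y \;=\; Q_ty \;=\; Q_tP_{t_0}y+Q_t(I-P_{t_0})y \;=\; P_{t_0,a}^{t,0}\!\circ\! P_{s,a}^{t_0,a}\,y+Q_t(P_s-P_{t_0})y,
\]
so $\|P_{s,a}^{t,0}-P_{t_0,a}^{t,0}\!\circ\! P_{s,a}^{t_0,a}\|\to 0$. Because two orthogonal projections with $\|P_s-P_{t_0}\|<1$ induce a Banach-space isomorphism between their ranges, the transition map $P_{s,a}^{t_0,a}:V(s,(-\infty,a))\to V(t_0,(-\infty,a))$ is invertible, hence Fredholm of index $0$, for $s$ near $t_0$. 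Combining stability of the Fredholm index under small-norm perturbations with the composition rule gives the stronger statement $\mathrm{ind}(P_{s,a}^{t,0})=\mathrm{ind}(P_{t_0,a}^{t,0})$ eventually, which implies the claimed limit. The companion identity $\mathrm{ind}(P_{t,0}^{s,a})=\mathrm{ind}(P_{t,0}^{t_0,a})$ is obtained by the same argument using $I-Q_t$ in place of $Q_t$, or alternatively as a direct consequence via Remark~\ref{inverse_relation(general result)}.

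The main obstacle is $(\star)$: strong convergence of spectral projections is automatic from the spectral theorem, but norm convergence is delicate because $\chi_{(-\infty,a)}$ is discontinuous, so one must genuinely exploit the gap at $a$ via a contour representation of the projection rather than just a continuous functional calculus argument. Everything after $(\star)$ is routine projection algebra and Fredholm-index bookkeeping.
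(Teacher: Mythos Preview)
Your proposal is correct and follows essentially the same route as the paper: both hinge on the norm continuity $(\star)$ of the spectral projection at the gap point $a$, derive the first displayed limit from the identity $Q_t(P_{t_0}-P_s)$ on $V(t_0,(-\infty,a))$, and then pass to the index statements using that $P_{t_0,a}^{s,a}$ (equivalently $P_{s,a}^{t_0,a}$) is an isomorphism of index $0$ together with stability of the Fredholm index and Remark~\ref{inverse_relation(general result)}. The only minor difference is that for the index step you set up a second, role-swapped limit $\|P_{s,a}^{t,0}-P_{t_0,a}^{t,0}\circ P_{s,a}^{t_0,a}\|\to 0$, whereas the paper reads the index continuity off directly from the first limit via ${\rm ind}(P_{t_0,a}^{t,0})={\rm ind}(P_{s,a}^{t,0}P_{t_0,a}^{s,a})={\rm ind}(P_{s,a}^{t,0})+{\rm ind}(P_{t_0,a}^{s,a})={\rm ind}(P_{s,a}^{t,0})$; your detour is harmless but unnecessary.
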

\begin{proof}
Since $a\notin\sigma(A-t_0B) $,
there is $\delta_1>0$ such that $P(\cdot,(-\infty,a))$ is a continuous path of operators on $(t_0-\delta_1,t_0+\delta_1)$,
and
\[\|(P(s,(-\infty,a))-P(t_0,(-\infty,a)))\|<1\]
with $s\in (t_0-\delta_1,t_0+\delta_1)$.
Then $P_{t_0,a}^{s,a}$ and $ P_{s,a}^{t_0,a}$ are both homeomorphisms.
Note that on $V(t_0,(-\infty,a))$, we have
 \[
 P_{t_0,a}^{t,0}-P_{s,a}^{t,0}P_{t_0,a}^{s,a}
 =P(t,(-\infty,0))(P(t_0,(-\infty,a))-P(s,(-\infty,a)))|_{V(t_0,(-\infty,a))}.
 \]
 By the continuity of $P(s,(-\infty,a))$, it follows that
 \[
 \lim_{s\to t_0}\|P_{t_0,a}^{t,0}-P_{s,a}^{t,0}P_{t_0,a}^{s,a}\|=0.
\]
Then we have
\begin{align*}
{\rm ind}(P_{t_0,a}^{t,0})&=\lim_{s\to t_0}{\rm ind}(P_{s,a}^{t,0}P_{t_0,a}^{s,a})\\
                          &=\lim_{s\to t_0}{\rm ind}(P_{s,a}^{t,0})+{\rm ind}(P_{t_0,a}^{s,a}))\\
                          &=\lim_{s\to t_0}{{\rm ind}}(P_{s,a}^{t,0}).
\end{align*}
By remark \ref{inverse_relation(general result)}, we get
\begin{align*}
{\rm ind}(P_{t,0}^{t_0,a})&=-{\rm ind} (P_{t_0,a}^{t,0})\\
                          &=-\lim_{s\to t_0}{\rm ind}(P_{s,a}^{t,0})\\
                          &=\lim_{s\to t_0}{\rm ind}(P_{t,0}^{s,a}).
\end{align*}
\end{proof}

\begin{lem}\label{local_flow}
For each $t_1\in [0,1]$, there is $\delta >0$ such that
\[{\rm ind} (P_{t_1,0}^{t_2,0})=sf\{A-t_1 B-s(t_2-t_1)B,[0,1]\}\] with $|t_2-t_1|<\delta$.
\end{lem}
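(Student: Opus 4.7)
The plan is to combine the local spectral-flow formula recorded immediately before the lemma with the continuity of projection indices from Lemma \ref{continuity}. The first step is the choice of $\epsilon$: because $A-t_1B\in\mathcal O^0_e(-1+t_1c_B,\,1-t_1c_B)$, its spectrum in a neighborhood of $0$ consists of finitely many eigenvalues of finite multiplicity, so I can pick $\epsilon>0$ so small that $\pm\epsilon\notin\sigma(A-t_1B)$ and $[-\epsilon,0)\cap\sigma(A-t_1B)=\emptyset$. The latter gives the crucial identification $V(t_1,(-\infty,-\epsilon))=V(t_1,(-\infty,0))$, so that $P_{t_1,-\epsilon}^{t_2,0}=P_{t_1,0}^{t_2,0}$ as operators.

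The second step is to fix $\delta>0$: by the stability of isolated points of the spectrum in $\mathcal O^0_e$ under bounded self-adjoint perturbations of norm at most $\delta c_B$, I can shrink $\delta$ so that for every $|t_2-t_1|<\delta$ and every $s\in[0,1]$, the operator $A_s:=A-t_1B-s(t_2-t_1)B$ satisfies $\pm\epsilon\notin\sigma(A_s)$, and $s\mapsto P(A_s,[-\epsilon,+\infty))$ is a norm-continuous family of projections differing from the APS projection $Q_{A_s}$ by a finite-rank operator. Reparameterizing via $u=t_1+s(t_2-t_1)$, the displayed formula from the paragraph preceding the lemma then applies to the entire path and yields
\[
sf\{A-t_1B-s(t_2-t_1)B,\,s\in[0,1]\}={\rm ind}(P_{t_2,-\epsilon}^{t_2,0}).
\]

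The third step bridges this with $P_{t_1,0}^{t_2,0}$. Applying Lemma \ref{continuity} at each $t_0\in[t_1,t_2]$ with $a=-\epsilon$ (valid because $-\epsilon\notin\sigma(A-t_0B)$ by the previous step), the function $s\mapsto{\rm ind}(P_{s,-\epsilon}^{t_2,0})$ is continuous on $[t_1,t_2]$; being integer-valued it is constant, hence
\[
{\rm ind}(P_{t_1,-\epsilon}^{t_2,0})={\rm ind}(P_{t_2,-\epsilon}^{t_2,0}).
\]
Combined with the identification $P_{t_1,-\epsilon}^{t_2,0}=P_{t_1,0}^{t_2,0}$ from step one, this gives the desired equality ${\rm ind}(P_{t_1,0}^{t_2,0})=sf\{A-t_1B-s(t_2-t_1)B,\,[0,1]\}$.

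The delicate point in the plan is the uniform choice of $\delta$: one must guarantee that the points $\pm\epsilon$ remain outside $\sigma(A_s)$ for all $s\in[0,1]$ and all $t_2$ in a neighborhood of $t_1$, so that a single local spectral-flow formula governs the whole path. Once this is secured, everything reduces to bookkeeping with spectral projections and Fredholm indices via Lemma \ref{continuity} and Remark \ref{inverse_relation(general result)}.
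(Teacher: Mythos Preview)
Your proposal is correct and follows essentially the same route as the paper: choose $\epsilon>0$ with $[-\epsilon,0)\cap\sigma(A-t_1B)=\emptyset$ so that $P_{t_1,-\epsilon}^{t_2,0}=P_{t_1,0}^{t_2,0}$, then pick $\delta>0$ small enough that $P(t,(-\infty,-\epsilon))$ is continuous on $(t_1-\delta,t_1+\delta)$, and conclude via the local spectral-flow formula $sf={\rm ind}(P_{t_2,-\epsilon}^{t_2,0})$. The only cosmetic difference is in the bridging step: the paper factors $P_{t_1,-\epsilon}^{t_2,0}$ through $P_{t_1,-\epsilon}^{t_2,-\epsilon}$ and uses $\lim_{t_2\to t_1}{\rm ind}(P_{t_1,-\epsilon}^{t_2,-\epsilon})=0$, whereas you argue that $s\mapsto{\rm ind}(P_{s,-\epsilon}^{t_2,0})$ is locally constant on $[t_1,t_2]$ --- both are direct consequences of the continuity of $P(\cdot,(-\infty,-\epsilon))$ near $t_1$ established in Lemma~\ref{continuity}.
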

\begin{proof}
Since $A-t_1B$ is a Fredholm operator, there is $\epsilon>0$ such that $P(t_1,(-\infty,0))=P(t_1,(-\infty,-\epsilon))$.
It follows that $\epsilon \notin \sigma (A-t_1B)$, and we have
\[
P_{t_1,-\epsilon}^{t_2,0}=P_{t_1,0}^{t_2,0}.
\]
By lemma \ref{continuity}  we have
\[
\lim_{t_2\to t_1}{\rm ind}(P_{t_1,-\epsilon}^{t_2,-\epsilon})={\rm ind}(P_{t_1,-\epsilon}^{t_1,-\epsilon})=0.
\]
It follows that
\begin{align*}
\lim_{t_2\to t_1}{\rm ind}(P_{t_1,-\epsilon}^{t_2,0})&=\lim_{t_2\to t_1}{\rm ind}(P_{t_2,-\epsilon}^{t_2,0}P_{t_1,-\epsilon}^{t_2,-\epsilon})\\
&=\lim_{t_2\to t_1}{\rm ind}(P_{t_2,-\epsilon}^{t_2,0})+\lim_{t_2\to t_1}{\rm ind}(P_{t_1,-\epsilon}^{t_2,-\epsilon})\\
&=\lim_{t_2\to t_1}{\rm ind}(P_{t_2,-\epsilon}^{t_2,0}).
\end{align*}
So there is $\delta>0$ such that ${{\rm ind}}(P_{t_1,-\epsilon}^{t_2,0})={\rm ind}(P_{t_2,-\epsilon}^{t_2,0}) $ with $|t_2-t_1|<\delta$,
and  $P(t,(-\infty,-\epsilon))$ is continuous on $(t_1-\delta,t_1+\delta)$.
Note that $sf\{A-t_1 B-s(t_2-t_1)B,[0,1]\}={{\rm ind}} P_{t_2,-\epsilon}^{t_2,0}$ by continuation of $P(t,(-\infty,-\epsilon))$.
Then the lemma follows.
\end{proof}
\begin{lem}\label{additional}
${{\rm ind}}(P_{0,0}^{1,0})={{\rm ind}}(P_{t,0}^{1,0})+{{\rm ind}}( P_{0,0}^{t,0})$ with $\forall t\in [0,1]$.
\end{lem}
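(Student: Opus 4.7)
The plan is to reinterpret this additivity as a constancy statement: defining
\[
f(t)={\rm ind}(P_{0,0}^{t,0})+{\rm ind}(P_{t,0}^{1,0}),
\]
the asserted identity becomes $f(t)=f(0)={\rm ind}(P_{0,0}^{1,0})$ (note that $P_{0,0}^{0,0}$ is the identity on $V(0,(-\infty,0))$, hence has Fredholm index $0$). Since $f$ is integer-valued and $[0,1]$ is connected, it is enough to prove $f$ is locally constant.

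To establish local constancy near a fixed $t_0\in[0,1]$, I would introduce a small auxiliary level $-\epsilon<0$ chosen so that $P(t_0,(-\infty,0))=P(t_0,(-\infty,-\epsilon))$ and $-\epsilon\notin\sigma(A-t_0B)$; this is possible because $\sigma(A-t_0B)\cap(-1+t_0c_B,1-t_0c_B)$ is discrete. By continuity of the spectral projection in the gap (the same fact used in the proof of Lemma~\ref{continuity}), $-\epsilon\notin\sigma(A-tB)$ for $t$ in a neighborhood of $t_0$. Lemma~\ref{continuity} applied with $a=-\epsilon$ then gives that both ${\rm ind}(P_{0,0}^{t,-\epsilon})$ and ${\rm ind}(P_{t,-\epsilon}^{1,0})$ are continuous, hence locally constant, in $t$ near $t_0$.

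The bridge from these shifted indices back to the unshifted ones goes through one exact composition and one finite-rank perturbation. On the domain-varying side, the factorization
\[
P_{t,-\epsilon}^{1,0}=P_{t,0}^{1,0}\circ P_{t,-\epsilon}^{t,0}
\]
is an exact identity of operators, and the inclusion $P_{t,-\epsilon}^{t,0}\colon V(t,(-\infty,-\epsilon))\hookrightarrow V(t,(-\infty,0))$ has Fredholm index $-\dim V(t,[-\epsilon,0))$, yielding
\[
{\rm ind}(P_{t,0}^{1,0})={\rm ind}(P_{t,-\epsilon}^{1,0})+\dim V(t,[-\epsilon,0)).
\]
On the codomain-varying side, a short computation on $V(0,(-\infty,0))$ gives
\[
P_{0,0}^{t,0}-P_{t,-\epsilon}^{t,0}\circ P_{0,0}^{t,-\epsilon}=P(t,[-\epsilon,0))\big|_{V(0,(-\infty,0))},
\]
whose right-hand side is finite rank since $[-\epsilon,0)$ lies in the gap of $\sigma_e(A-tB)$; thus
\[
{\rm ind}(P_{0,0}^{t,0})={\rm ind}(P_{0,0}^{t,-\epsilon})-\dim V(t,[-\epsilon,0)).
\]
Adding the two expressions, the $\pm\dim V(t,[-\epsilon,0))$ contributions cancel and $f(t)={\rm ind}(P_{0,0}^{t,-\epsilon})+{\rm ind}(P_{t,-\epsilon}^{1,0})$, which is locally constant by the previous paragraph.

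The main delicate point I expect is bookkeeping the asymmetry between the two sides: the codomain-varying decomposition is only exact up to the finite-rank residual $P(t,[-\epsilon,0))$, while the domain-varying one factors cleanly. Compactness of this residual (hence the stability of the Fredholm index arithmetic) uses crucially the essential-spectrum gap assumption $A\in\mathcal O^0_e(-1,1)$ together with $B\in\mathcal L_s(\mathbf H,-1,1)$. Once these ingredients are in place, local constancy of $f$ upgrades to constancy on $[0,1]$ by connectedness, and evaluating at $t=0$ yields the stated identity.
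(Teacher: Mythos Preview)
Your proof is correct and follows essentially the same strategy as the paper: define $f(t)={\rm ind}(P_{0,0}^{t,0})+{\rm ind}(P_{t,0}^{1,0})$, shift the middle level $0$ to a nearby value $a=-\epsilon\notin\sigma(A-t_0B)$, and use Lemma~\ref{continuity} to get local constancy. The only cosmetic difference is in the ``bridge'' step: the paper invokes Lemma~\ref{finite_pertubation} and Remark~\ref{inverse_relation(general result)} to write ${\rm ind}(P_{t_0,0}^{1,0})+{\rm ind}(P_{0,0}^{t_0,0})={\rm ind}(P_{t_0,a}^{1,0})+{\rm ind}(P_{0,0}^{t_0,a})$ abstractly (the cross terms ${\rm ind}(P_{t_0,0}^{t_0,a})$ and ${\rm ind}(P_{t_0,a}^{t_0,0})$ cancel by the inverse relation), whereas you compute those correction terms explicitly as $\pm\dim V(t,[-\epsilon,0))$ via one exact factorization and one finite-rank residual. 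Your extra hypothesis $P(t_0,(-\infty,0))=P(t_0,(-\infty,-\epsilon))$ is not actually needed for the argument to go through.
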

\begin{proof}
By Lemma \ref{finite_pertubation} and Lemma \ref{inverse_relation(general result)}, for any $t_0\in [0,1]$
\begin{align*}
{\rm ind}(P_{t_0,0}^{1,0})+{\rm ind}( P_{0,0}^{t_0,0})&={\rm ind}(P_{t_0,a}^{1,0})+{\rm ind}( P_{t_0,0}^{t_0,a})+{\rm ind}( P_{t_0,a}^{t_0,0})+{\rm ind}( P_{0,0}^{t_0,a})\\
                                                  &={\rm ind}(P_{t_0,a}^{1,0})+{{\rm ind}}( P_{0,0}^{t_0,a}),\; \forall a\in (-1+sc_B,1-sc_B).
\end{align*}

Choose $a_{t_0}\in (-1+t_0c_B,1-t_0c_B)$ and  $a_{t_0}\notin \sigma(A-t_0B)$.
By lemma \ref{continuity},
\[
f:t\to {\rm ind}(P_{t,a_{t_0}}^{1,0})+{\rm ind}( P_{0,0}^{t,a_{t_0}})
\]
 is continuous at $t_0$.
So the function $f:t\to {{\rm ind}}(P_{t,0}^{1,0})+{{\rm ind}}( P_{0,0}^{t,0}) $ is continuous on $[0,1]$.
So it must be a constant function.
It follows that
\[
{{\rm ind}}(P_{0,0}^{1,0})=f(1)=f(t)={{\rm ind}}(P_{t,a}^{1,0})+{{\rm ind}}( P_{0,0}^{t,a}).
\]
\end{proof}
\begin{rem}
In fact, we have
\[{{\rm ind}}(P_{a,0}^{b,0})={{\rm ind}}(P_{s,0}^{b,0})+{{\rm ind}}( P_{a,0}^{s,0})\] with $s\in [0,1]$.
\end{rem}
\begin{thm}\label{thm-relative morse index and spectral flow}
We have
\[
sf\{A-tB,[a,b]\}={{\rm ind}}(P_{a,0}^{b,0})=-{{\rm ind}}(P_{b,0}^{a,0})
\]
with $[a,b]\subset [0,1]$.
\end{thm}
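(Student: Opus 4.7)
My plan is to combine the local identification of the index with the spectral flow (Lemma~\ref{local_flow}) with the additivity property of the index (the remark following Lemma~\ref{additional}), using a compactness argument on $[a,b]$ to pass from a purely local statement to a global one.

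First, the second equality $\,{\rm ind}(P_{a,0}^{b,0})=-{\rm ind}(P_{b,0}^{a,0})\,$ is immediate from Remark~\ref{inverse_relation(general result)} applied with the parameter choice $s=a$, $t=b$, $a=b=0$; this requires $0\in(-1+ac_B,1-ac_B)\cap(-1+bc_B,1-bc_B)$, which holds since $c_B<1$. So the real content is the first equality, which I will prove by reducing to Lemma~\ref{local_flow} via a finite partition of $[a,b]$.

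For each $t_1\in[a,b]$, Lemma~\ref{local_flow} provides some $\delta(t_1)>0$ such that ${\rm ind}(P_{t_1,0}^{t_2,0})=sf\{A-t_1B-s(t_2-t_1)B,[0,1]\}$ whenever $|t_2-t_1|<\delta(t_1)$. By the reparametrization invariance of the spectral flow, the right-hand side equals $sf\{A-tB,[t_1,t_2]\}$. The open intervals $(t_1-\delta(t_1),t_1+\delta(t_1))$ cover the compact set $[a,b]$, so I can extract a finite subcover and from it build a partition $a=t_0<t_1<\cdots<t_n=b$ such that each subinterval $[t_{i-1},t_i]$ lies inside one of the good neighborhoods. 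Then on each piece
\[
{\rm ind}(P_{t_{i-1},0}^{t_i,0})=sf\{A-tB,[t_{i-1},t_i]\}.
\]

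Finally I sum these equalities. The right-hand side telescopes to $sf\{A-tB,[a,b]\}$ by the standard concatenation/additivity property of the spectral flow. For the left-hand side, I iterate the identity ${\rm ind}(P_{a,0}^{b,0})={\rm ind}(P_{s,0}^{b,0})+{\rm ind}(P_{a,0}^{s,0})$ from the remark following Lemma~\ref{additional} (which is the analogue of Lemma~\ref{additional} with endpoints $0,1$ replaced by $a,b$ and proved by the same continuity-of-$f$ argument applied on $[a,b]$). The telescoping gives ${\rm ind}(P_{a,0}^{b,0})=\sum_{i=1}^n{\rm ind}(P_{t_{i-1},0}^{t_i,0})$, and equating the two sums yields the theorem.

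The main obstacle is technical rather than conceptual: one has to make sure that the additivity ${\rm ind}(P_{a,0}^{b,0})={\rm ind}(P_{s,0}^{b,0})+{\rm ind}(P_{a,0}^{s,0})$ is available in the form needed for the telescoping (not just for the endpoints $0$ and $1$). This is exactly what the remark after Lemma~\ref{additional} asserts, and its justification is the same continuity argument as in Lemma~\ref{additional}: one introduces auxiliary cuts $a_{t_0}\notin\sigma(A-t_0B)$ inside the essential gap, uses Lemma~\ref{continuity} to make the function $t\mapsto{\rm ind}(P_{t,0}^{b,0})+{\rm ind}(P_{a,0}^{t,0})$ locally constant, and then invokes connectedness of $[a,b]$. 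Once that is in place, the compactness-plus-telescoping step is routine.
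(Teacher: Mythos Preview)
Your proposal is correct and follows essentially the same approach as the paper, whose proof consists of the single sentence ``It is a direct consequence of Lemma~\ref{local_flow} and Lemma~\ref{additional}.'' You have simply spelled out the compactness-plus-telescoping argument that is implicit in that sentence, and your use of Remark~\ref{inverse_relation(general result)} for the second equality and of the remark after Lemma~\ref{additional} for general endpoints is exactly what the paper intends.
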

\begin{proof}
It is a direct consequence of Lemma \ref{local_flow} and Lemma \ref{additional}.
\end{proof}

Now by the property of $i_A(B)$(see \cite[Lemma 2.9]{Wang-Liu-2016},\cite[Lemma 2.3]{Wang-Liu-2017}) and Theorem \ref{thm-relative morse index and spectral flow}, we have the following result.
\begin{prop}\label{prop-relations between indexes}
$
i^*_A(B)=i_A(B), A\in\mathcal{O}^0_e(-1,1),B\in\mathcal{L}_s(\mathbf H,-1,1).
$
\end{prop}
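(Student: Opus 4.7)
The plan is to identify both $i^*_A(B)$ and $i_A(B)$ with the spectral flow of the affine path $t \mapsto A - tB$ on $[0,1]$, and then conclude equality by transitivity.

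First, by Definition \ref{defi-index defined by relative Morse index}, $i^*_A(B) = \mathrm{ind}(P^{0,0}_{1,0})$. Theorem \ref{thm-relative morse index and spectral flow}, applied with $[a,b] = [0,1]$, already expresses this Fredholm index as a spectral flow, namely $sf\{A - tB, [0,1]\} = \mathrm{ind}(P^{1,0}_{0,0}) = -\mathrm{ind}(P^{0,0}_{1,0})$. So $i^*_A(B)$ is (up to the sign dictated by Theorem \ref{thm-relative morse index and spectral flow}) the spectral flow of the linear homotopy from $A$ to $A - B$ inside $\mathcal{O}^0_e(-1,1)$. This half of the argument is already essentially carried out in Section \ref{section-relative Morse index}, so no new work is needed.

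Second, I would invoke the characterizations of $i_A(B)$ provided by \cite[Lemma~2.9]{Wang-Liu-2016} and \cite[Lemma~2.3]{Wang-Liu-2017}, which are precisely the references flagged in the paragraph preceding the proposition. Those lemmas identify $i_A(B)$ with the spectral flow along the same affine path $t \mapsto A - tB$, $t \in [0,1]$. Chaining this identification with the one above then yields $i^*_A(B) = i_A(B)$ directly.

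The main obstacle — essentially the only substantive issue — is matching sign and orientation conventions between the two definitions. The definition of $i^*_A(B)$ uses the Fredholm index of a projection between negative spectral subspaces, which implicitly fixes an orientation for counting eigenvalue crossings at $0$; the index $i_A(B)$ of \cite{Wang-Liu-2016,Wang-Liu-2017} is defined through a Galerkin-type procedure and has its own baked-in sign. One must therefore read the cited lemmas carefully and verify that both spectral-flow expressions agree on the sign attached to a crossing. Once this bookkeeping is done the proof collapses to a one-line comparison, which is exactly why the paper writes this step as a direct consequence.
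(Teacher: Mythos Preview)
Your proposal is correct and matches the paper's own argument essentially line for line: the paper simply cites Theorem \ref{thm-relative morse index and spectral flow} together with \cite[Lemma~2.9]{Wang-Liu-2016} and \cite[Lemma~2.3]{Wang-Liu-2017} to identify both $i^*_A(B)$ and $i_A(B)$ with the spectral flow of $t\mapsto A-tB$ on $[0,1]$, exactly as you outline. Your remark about checking sign conventions is the only genuine content to verify, and the paper leaves that implicit as well.
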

Generally, with the same method we can define the relative Morse index $i^*_A(B)$ for $A\in\mathcal{O}^0_e(\lambda_a,\lambda_b)$, $B\in\mathcal{L}_s(\mathbf H,\lambda_a,\lambda_b)$ and we can prove the index $i^*_A(B)$ coincide with $i_A(B)$ by the concept of spectral flow, we omit them here.

%\section{The degree theory of (OE)}

\section{Saddle point reduction of (O.E.) and some abstract critical points Theorems}\label{section-saddle point reduction}

%\subsection{The degree theory of (OE)}
%{\r I'm NOT sure the degree theory will be realized for (OE), BUT, we  can still use the degree theory to duel with (OE) since we have the method of saddle point reduction to deduce the problem into the finite dimensional space.
%If we can establish the degree theory of (OE), the relationship with $i_A(B)$ and specially the degree of saddle point reduction should also be  established. }
%If $0\in\sigma(A)$, denote by $A_0$ the projection map on the eigenspace of $0$.
% and choose $\delta>0$ satisfying $\delta<b$, denote $A_\delta=A+\delta A_0$ and $F'_\delta(z)=F'(z)+\delta A_0z$, then operator equation (OS) is equivalent to the following operator equation
%\[
%A_\delta z=F'_\delta(z),
%\]
%where $A_\delta$ satisfying \eqref{eq-spectrum prpperty of A} and $$

Now for simplicity, let $b>0$ and $a=-b$, for $A\in\mathcal O^0_e(-b,b)$, we consider the following operator equation
\[
Az=F'(z),\;z\in D(A)\subset \mathbf H, \eqno(O.E.)
\]
where  $F\in C^1(\mathbf H,\mathbb{R})$. Assume\\
\noindent($F_1$) $F\in C^1(\mathbf H,\mathbb{R})$, $F':\mathbf H\to\mathbf H$ is Lipschitz continuous
 \begin{equation}\label{eq-the  Lipschitz continuity of F'}
\|F'(z+h)-F'(z)\|_{\mathbf H}\leq l_F\|h\|_{\mathbf H},\;\forall z,h\in\mathbf H,
\end{equation}
with its Lipschitz constant $l_F<b$.
\subsection{Saddle point reduction of (O.E.)}
In this part, assume $A\in\mathcal{O}^0_e(-b,b)$  and $F$ satisfies condition ($F_1$), we will consider the method of saddle point reduction without assuming the nonlinear term $F\in C^2(D(|A|^{1/2}))$, then we will give some abstract critical point theorems. Let $E_A(z)$ the spectrum measure of $A$, since $\sigma_e(A)\cap(-b,b)=\emptyset$, we can choose $l\in (l_F,b)$, such that
 \[
 -l, l\notin\sigma(A).
 \]
Different from the above section,  in this section, consider projection map $P(A,U)$ defined in \eqref{eq-projection splitting}  on $H$, for simplicity, we rewrite them  as
\begin{equation}\label{eq-projections}
P^-_A:=P(A,(-\infty,-l)),\;P^+_A:=P(A,(l,\infty)),\;P^0_A:=P(A,(-l,l)),
\end{equation}
in this section.
%additionally, if $0\in\sigma(A)$, denote by $A_0$ the projection map on the eigenspace of $0$.
Then we have the following decomposition which is different from \eqref{eq-decomposition of space H},
\[
\mathbf H=\widehat{\mathbf  H}^-_A\oplus\widehat{\mathbf  H}^+_A \oplus\widehat{\mathbf  H}^0_A,
\]
where $\widehat{\mathbf  H}^*_A:=P^*_A\mathbf  H$($*=\pm, 0$) and $\widehat{\mathbf  H}^0_A$ is finite dimensional subspace of $\mathbf  H$,
for simplicity we rewrite $\mathbf H^*:=\widehat{\mathbf  H}^*_A$. Denote $A^*$ the restriction of $A$ on $\mathbf H^*$($*=\pm, 0$), thus we have $(A^\pm)^{-1}$ are bounded self-adjoint linear operators on $\mathbf H^\pm$ respectively and satisfying
\begin{equation}\label{eq-the norm of the inverse of Apm}
\|(A^\pm)^{-1}\|\leq \frac{1}{l}.
\end{equation}
%Corresponding the decomposition of $\mathbf H$,
%\[
%\mathbf E=\mathbf E^-\oplus \mathbf E^+ \oplus \mathbf E^0,
%\]
%where $\mathbf E^*=|A|^{-\frac{1}{2}} \mathbf H^*$($*=\pm$) and $\mathbf E^0=\mathbf H^0$.
Then (OE) can be rewritten as
\begin{equation}\label{eq-decomposition 1 of OE}
z^\pm =(A^\pm)^{-1}P^\pm_A F'(z^++z^-+z^0),
\end{equation}
and
\begin{equation}\label{eq-decomposition 2 of OE}
A^0z^0=P^0_AF'(z^++z^-+z^0),
\end{equation}
where $z^*=P^*_Az$($*=\pm, 0$), for simplicity, we rewrite $x:=z^0$. From \eqref{eq-the  Lipschitz continuity of F'} and \eqref{eq-the norm of the inverse of Apm}, we have $(A^\pm)^{-1}P^\pm_A F'$ is  contraction map on $\mathbf H^+\oplus \mathbf H^-$ for any $x\in\mathbf H^0$. So there is a map $z^\pm(x):\mathbf H^0\to\mathbf H^\pm$ satisfying
\begin{equation}\label{eq-zpm(x)}
z^\pm(x)=(A^\pm)^{-1}P^\pm F'(z^\pm(x)+x),\;\forall x\in\mathbf H,
\end{equation}
and  the following properties.
\begin{prop}\label{prop-continuous and property of saddle point reduction}
(1) The map $z^\pm(x):\mathbf H^0\to \mathbf H^\pm$ is continuous, in fact we have
\[
\|(z^++z^-)(x+h)-(z^++z^-)(x)\|_\mathbf H\leq\frac{l_F}{l-l_F}\|h\|_{\mathbf H},\;\;\forall x,h\in\mathbf H^0.
\]
(2) $\|(z^++z^-)(x)\|_\mathbf H\displaystyle\leq \frac{l_F}{l-l_F}\|x\|_{\mathbf H}+\frac{1}{l-l_F}\|F'(0)\|_{\mathbf H}$.
\end{prop}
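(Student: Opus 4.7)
The plan is to treat $z^+(x)+z^-(x)$ as the unique fixed point of a parametrized contraction on $\mathbf H^+\oplus\mathbf H^-$ and then read off the two estimates directly from the contraction inequality. Define, for $x\in\mathbf H^0$ and $u\in\mathbf H^+\oplus\mathbf H^-$,
\[
T(u,x):=(A^+)^{-1}P^+_A F'(u+x)+(A^-)^{-1}P^-_A F'(u+x),
\]
so that the identity \eqref{eq-zpm(x)} reads $y(x)=T(y(x),x)$ with $y(x):=z^+(x)+z^-(x)$. The existence and uniqueness of $y(x)$ was already obtained in the paragraph preceding \eqref{eq-zpm(x)}, so I may use it.

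For part (1), I would fix $x,h\in\mathbf H^0$, subtract the two fixed point equations and apply the Lipschitz assumption $(F_1)$ together with the bound $\|(A^\pm)^{-1}\|\le 1/l$ from \eqref{eq-the norm of the inverse of Apm}. Writing $\Delta:=y(x+h)-y(x)\in\mathbf H^+\oplus\mathbf H^-$, I would estimate the $\mathbf H^+$ and $\mathbf H^-$ components separately and then use that these components are mutually orthogonal and bounded individually by $(1/l)\|F'(y(x+h)+x+h)-F'(y(x)+x)\|$; combining with \eqref{eq-the  Lipschitz continuity of F'} gives
\[
\|\Delta\|_{\mathbf H}\le \frac{l_F}{l}\bigl(\|\Delta\|_{\mathbf H}+\|h\|_{\mathbf H}\bigr).
\]
Since $l_F<l$, rearranging yields $\|\Delta\|_{\mathbf H}\le\frac{l_F}{l-l_F}\|h\|_{\mathbf H}$, which is the desired Lipschitz bound and in particular continuity. (A sharper $\frac{l_F}{\sqrt{l^2-l_F^2}}$ is available via orthogonality of $\Delta$ and $h$, but the cruder triangle inequality estimate matches the stated constant.)

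For part (2), I would apply the same kind of inequality but comparing $y(x)$ with $0$ instead of $y(x+h)$ with $y(x)$. Namely,
\[
\|y(x)\|_{\mathbf H}=\|T(y(x),x)\|_{\mathbf H}\le \|T(y(x),x)-T(0,0)\|_{\mathbf H}+\|T(0,0)\|_{\mathbf H}.
\]
The first term is bounded by $(l_F/l)(\|y(x)\|_{\mathbf H}+\|x\|_{\mathbf H})$ as above, while $\|T(0,0)\|_{\mathbf H}\le(1/l)\|F'(0)\|_{\mathbf H}$ comes straight from $\|(A^\pm)^{-1}\|\le 1/l$ and $\|P^\pm_A\|\le 1$. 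Solving the resulting inequality
\[
\Bigl(1-\tfrac{l_F}{l}\Bigr)\|y(x)\|_{\mathbf H}\le \tfrac{l_F}{l}\|x\|_{\mathbf H}+\tfrac{1}{l}\|F'(0)\|_{\mathbf H}
\]
for $\|y(x)\|_{\mathbf H}$ yields the stated bound.

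No genuine obstacle is expected: the argument is the standard parameter-dependent Banach fixed point estimate, with the contraction factor $l_F/l<1$ coming from the gap assumption on $\sigma(A)$ around $0$. The only mildly delicate point is keeping track of which projections and inverses live on which subspaces so that the bounds $\|(A^\pm)^{-1}\|\le 1/l$ and $\|P^\pm_A\|\le 1$ apply cleanly; the orthogonal decomposition $\mathbf H=\mathbf H^+\oplus\mathbf H^-\oplus\mathbf H^0$ and \eqref{eq-the norm of the inverse of Apm} make this bookkeeping straightforward.
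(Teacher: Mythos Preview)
Your proposal is correct and follows essentially the same route as the paper: both arguments subtract the two fixed-point equations, apply $\|(A^\pm)^{-1}P^\pm_A\|\le 1/l$ together with the Lipschitz bound on $F'$, and then rearrange the resulting inequality $\|\Delta\|\le (l_F/l)(\|\Delta\|+\|h\|)$ (respectively with $F'(0)$ inserted for part~(2)). Your packaging via the map $T$ and your side remark about the sharper orthogonality constant are cosmetic differences only.
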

\noindent\textbf{Proof.}(1) For any $x,\;h\in \mathbf H^0$, here we write $z^\pm(x):=z^+(x)+z^-(x)$ and $(A^\pm)^{-1}P^\pm_A:=(A^+)^{-1}P^+_A+(A^-)^{-1}P^-_A$ for simplicity, we have
\begin{align*}
\|z^\pm(x+h)-z^\pm(x)\|_{\mathbf H} &=\|(A^\pm)^{-1}P^\pm_A F'(z^\pm(x+h)+x+h)-(A^\pm)^{-1}P^\pm_A F'(z^\pm(x)+x)\|_{\mathbf H}\\
                             &\leq\frac{1}{l}\|F'(z^\pm(x+h)+x+h)-F'(z^\pm(x)+x)\|_{\mathbf H}\\
                             &\leq\frac{l_F}{l}\|z^\pm(x+h)-z^\pm(x)+h\|_{\mathbf H}\\
                             &\leq\frac{l_F}{l}\|z^\pm(x+h)-z^\pm(x)\|_{\mathbf H}+\frac{l_F}{l}\|h\|_{\mathbf H}.
\end{align*}
So we have $\|z^\pm(x+h)-z^\pm(x)\|_\mathbf H\leq\frac{l_F}{l-l_F}\|h\|_{\mathbf H}$ and the map $z^\pm(x):{\mathbf H}^0\to {\mathbf H}^\pm$ is continuous.\\
(2)Similarly,
\begin{align*}
\|z^\pm(x)\|_{\mathbf H}&=\|(A^\pm)^{-1}P^\pm_A F'(z^\pm(x)+x)\|_{\mathbf H}\\
                &\leq\frac{1}{l}\|F'(z^\pm(x)+x)\|_{\mathbf H}\\
                &\leq\frac{1}{l}\|F'(z^\pm(x)+x)-F'(0)\|_{\mathbf H}+\frac{1}{l}\|F'(0)\|_{\mathbf H}\\
                &\leq\frac{l_F}{l}(\|z^\pm(x)\|_{\mathbf H}+\|x\|_H)+\frac{1}{l}\|F'(0)\|_{\mathbf H}.
\end{align*}
So we have $\|z^\pm(x)\|_\mathbf H\leq \frac{l_F}{l-l_F}\|x\|_{\mathbf H}+\frac{1}{l-l_F}\|F'(0)\|_{\mathbf H}$.\endproof

\begin{rem}Denote $\mathbf E=D(|A|^{\frac{1}{2}})$, with its norm
\[
\|z\|^2_\mathbf E:=\||A|^{\frac{1}{2}}(z^++z^-)\|^2_\mathbf H+\|x\|^2_\mathbf H,\;u\in \mathbf E.
\]
From \eqref{eq-zpm(x)}, we have $z^\pm(x)\in D(A)\subset\mathbf E$,
%with the same proof in Proposition \ref{prop-continuous and property of saddle point reduction} and notice that $\|u^\pm\|_\mathbf E\geq l^\frac{1}{2}\|u^\pm\|_\mathbf H$,
and we have\\
 (1) The map $z^\pm(x):\mathbf H^0\to \mathbf E$ is continuous, and
 \begin{equation}\label{eq-uniform continuous of z in E}
 \|(z^++z^-)(x+h)-(z^++z^-)(x)\|_\mathbf E\leq\frac{l_F \cdot l^\frac{1}{2}}{l-l_F}\|h\|_{\mathbf H},\;\;\forall x,h\in\mathbf H^0.
 \end{equation}
(2) $\|(z^++z^-)(x)\|_\mathbf E\displaystyle\leq \frac{l^\frac{1}{2}}{l-l_F}(l_F\cdot\|x\|_{\mathbf H}+\|F'(0)\|_{\mathbf H})$.
\end{rem}
\noindent{\bf Proof.} The proof is similar to Proposition \ref{prop-continuous and property of saddle point reduction}, we only prove (1).
\begin{align*}
\|z^\pm(x+h)-z^\pm(x)\|_{\mathbf E} &=\|(|A|^{\frac{1}{2}})[z^\pm(x+h)-z^\pm(x)]\|_{\mathbf H}\\
                                    &=\|(A^\pm)^{-\frac{1}{2}}[P^\pm_A F'(z^\pm(x+h)+x+h)-P^\pm_A F'(z^\pm(x)+x)]\|_{\mathbf H}\\
                                    &\leq\frac{1}{l^\frac{1}{2}}\|F'(z^\pm(x+h)+x+h)-F'(z^\pm(x)+x)\|_{\mathbf H}\\
                                    &\leq\frac{l_F}{l^\frac{1}{2}}\|z^\pm(x+h)-z^\pm(x)+h\|_{\mathbf H}\\
                                    &\leq\frac{l_F}{l^\frac{1}{2}}\|z^\pm(x+h)-z^\pm(x)\|_{\mathbf H}+\frac{l_F}{l^\frac{1}{2}}\|h\|_{\mathbf H}\\
                                    &\leq\frac{l_F}{l}\|z^\pm(x+h)-z^\pm(x)\|_{\mathbf E}+\frac{l_F}{l^\frac{1}{2}}\|h\|_{\mathbf H},
\end{align*}
where the last inequality depends on the fact that $\|z^\pm\|_\mathbf E\geq l^\frac{1}{2}\|z^\pm\|_\mathbf H$, so we have \eqref{eq-uniform continuous of z in E}.

% For simplicity, denote $u=x$ and $z(u)=z^+(u)+z^-(u)+u$. Define the functional $a:\mathbf H^0\to \mathbb{R}$ by
% \begin{equation}\label{eq-saddle point reduction}
% a(u)=\frac{1}{2}(Az(u),z(u))_H-F(z(u)),\;u\in \mathbf H^0.
% \end{equation}
Now, define the map $z:\mathbf H^0\to\mathbf H$ by
\[
z(x)=x+z^+(x)+z^-(x).
\]
 Define the functional $a:\mathbf H^0\to \mathbb{R}$ by
 \begin{equation}\label{eq-saddle point reduction}
 a(x)=\frac{1}{2}(Az(x),z(x))_\mathbf H-F(z(x)),\;x\in \mathbf H^0.
 \end{equation}
 With standard discussion, the critical points of $a$ correspond to the solutions of (O.E.), and we have
 \begin{lem}\label{lem-the smoothness of a}
 Assume $F$ satisfies ($F_1$), then we have $a\in C^1(\mathbf H^0,\mathbb{R})$ and
 \begin{equation}\label{eq-saddle point reduction-the derivative of a}
 a'(x)=Az(x)-F'(z(x)),\;\;\forall x\in \mathbf H^0.
 \end{equation}
 Further more, if $F\in C^2(\mathbf H,\mathbb R)$, we have $a\in C^2(\mathbf H^0,\mathbb{R})$, for any critical point $x$ of $a$, $F''(z(x))\in \mathcal{L}_s(\mathbf H,-b,b)$  and the morse index $m^-_a(x)$  satisfies the following equality
 \begin{equation}\label{eq-relation between morse index and our index}
 m^-_a(x_2)-m^-_a(x_1)=i^*_A(F''(z(x_2)))-i^*_A(F''(z(x_1))),\;\;\forall x_1,x_2\in \mathbf H^0.
 \end{equation}
  \end{lem}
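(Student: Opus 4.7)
The plan splits the proof into four parts: the $C^1$-regularity of $a$ together with the formula for $a'$; the upgrade to $C^2$ when $F \in C^2$; the inclusion $F''(z(x)) \in \mathcal{L}_s(\mathbf H, -b, b)$; and the Morse index identity.

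For the $C^1$ part, in order not to require differentiability of $z^\pm(x)$ (which $(F_1)$ alone does not guarantee), I would expand $a(x+h) - a(x)$ directly. Setting $y := z(x+h) - z(x)$, a standard Taylor expansion yields
\[
a(x+h) - a(x) = (Az(x) - F'(z(x)), y)_{\mathbf H} + \tfrac{1}{2}(Ay, y)_{\mathbf H} + o(\|y\|_{\mathbf H}).
\]
The key observation is that applying $P^+_A$ and $P^-_A$ to \eqref{eq-zpm(x)} shows $Az(x) - F'(z(x)) \in \mathbf H^0$, so only the $\mathbf H^0$-component $h$ of $y$ contributes to the leading term, i.e.\ $(Az(x) - F'(z(x)), y)_{\mathbf H} = (Az(x) - F'(z(x)), h)_{\mathbf H}$. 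The quadratic remainder is $O(\|h\|_{\mathbf H}^2)$ because the identity $A(z^\pm(x+h) - z^\pm(x)) = P^\pm_A(F'(z(x+h)) - F'(z(x)))$, combined with the Lipschitz bound of Proposition~\ref{prop-continuous and property of saddle point reduction}, gives $\|Ay^\pm\|_{\mathbf H} \le l_F \|y\|_{\mathbf H}$. Continuity of $a'$ then follows from that of $z(\cdot)$ and $F'$.

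When $F \in C^2(\mathbf H, \mathbb R)$, applying the implicit function theorem to
\[
G(x, w) := w - \bigl[(A^+)^{-1}P^+_A + (A^-)^{-1}P^-_A\bigr] F'(x + w)
\]
yields $z^\pm \in C^1(\mathbf H^0, \mathbf H)$, since $\|F''\| \le l_F < l$ makes $\partial_w G$ invertible. Thus $a \in C^2(\mathbf H^0, \mathbb R)$; differentiating $a'(x) = A^0 x - P^0_A F'(z(x))$ and using once more the orthogonality $(A - F''(z(x))) z'(x) h \in \mathbf H^0$ gives
\[
(a''(x) h, h)_{\mathbf H} = \bigl((A - F''(z(x))) z'(x) h,\; z'(x) h\bigr)_{\mathbf H}.
\]
The inclusion $F''(z(x)) \in \mathcal{L}_s(\mathbf H, -b, b)$ is immediate from $\|F''(z(x))\| \le l_F < b$.

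The main obstacle is the Morse index identity, which I would attack through a spectral flow comparison. For a continuous path $\gamma : [0,1] \to \mathbf H^0$ joining $x_1$ and $x_2$, concatenation of spectral flows combined with Theorem~\ref{thm-relative morse index and spectral flow} gives
\[
i^*_A(F''(z(x_2))) - i^*_A(F''(z(x_1))) = sf\{A - F''(z(\gamma(t))), t \in [0,1]\},
\]
while in finite dimensions the symmetric path $t \mapsto a''(\gamma(t))$ has spectral flow $m_a^-(x_2) - m_a^-(x_1)$. The two flows agree by a Schur-complement argument. Writing $\widetilde P := P^+_A + P^-_A$, the operator $A - F''(z(x))$ has a $\mathbf H^0 \oplus (\mathbf H^+ \oplus \mathbf H^-)$ block structure whose $(\mathbf H^+ \oplus \mathbf H^-)$-diagonal block $A - \widetilde P F''(z(x)) \widetilde P$ has spectrum outside $(-l + l_F, l - l_F)$, hence is uniformly invertible in $x$ and contributes nothing to the spectral flow. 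A continuous family of self-adjoint congruences built from $z'(x)$ (the saddle conjugation) block-diagonalises $A - F''(z(x))$ into this invertible block and its Schur complement on $\mathbf H^0$, which is precisely $a''(x)$. Homotopy invariance and additivity of spectral flow then yield the desired identity.
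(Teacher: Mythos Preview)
Your treatment of the $C^1$ and $C^2$ parts coincides with the paper's: both expand $a(x+h)-a(x)$ directly (the paper calls your $y$ by $\eta(x,h)$), use the saddle equation \eqref{eq-zpm(x)} to see that $Az(x)-F'(z(x))\in\mathbf H^0$ so that only the $h$-component survives in the linear term, and then invoke the implicit function theorem for the $C^2$ upgrade. You are in fact slightly more explicit than the paper about why $\frac12(Ay,y)_{\mathbf H}=O(\|h\|^2_{\mathbf H})$ despite $A$ being unbounded---the paper just asserts this after \eqref{eq-uniform astimate of eta}, whereas you supply the missing observation that $Ay^\pm=P^\pm_A\bigl(F'(z(x+h))-F'(z(x))\bigr)$ is controlled by $l_F\|y\|_{\mathbf H}$.

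The genuine difference is in the Morse index identity. The paper does \emph{not} argue this directly: after noting $a''(x)=A|_{\mathbf H^0}-P^0F''(z(x))z'(x)$, it simply invokes Theorem~\ref{thm-relative morse index and spectral flow} together with Definition~2.8 and Lemma~2.9 of \cite{Wang-Liu-2016}, where the link between the finite-dimensional Morse index of the reduced functional and the index $i_A(B)$ is established. Your route---computing the spectral flow of $A-F''(z(\gamma(t)))$ along a path and matching it to that of $a''(\gamma(t))$ via a Schur-complement/block-diagonalisation argument---is a correct and more self-contained alternative; it is essentially what lies behind the cited lemma. Two cautions if you carry it out in full: keep track of the sign convention (in this paper $i^*_A(B)=\mathrm{ind}(P^{0,0}_{1,0})=-sf\{A-tB,[0,1]\}$ by Definition~\ref{defi-index defined by relative Morse index} and Theorem~\ref{thm-relative morse index and spectral flow}), and handle possible degeneracy at the endpoints $x_1,x_2$ when translating spectral flow into a difference of Morse indices.
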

  \pf For any $x,h\in\mathbf H^0$, write
  \[
  \eta(x,h):=z^+(x+h)+z^-(x+h)-z^+(x)-z^-(x)+h
  \]
   for simplicity, that is to say
  \[
  z(x+h)=z(x)+\eta(x,h),\;\;\forall x,h\in\mathbf H^0,
  \]
  and from \eqref{eq-uniform continuous of z in E}, we have
  \begin{equation}\label{eq-uniform astimate of eta}
  \|\eta(x,h)\|_\mathbf H\leq C\|h\|_{\mathbf H},\;\;\forall x,h\in\mathbf H^0,
  \end{equation}
  where $C=\displaystyle \frac{l+l_F}{l-l_F}$. Let $h\to 0$ in $\mathbf H^0$, and for any $x\in\mathbf H^0$, we have
  \begin{align*}
  a(x+h)-a(x)=&\frac{1}{2}[(Az(x+h),z(x+h))_\mathbf H-(Az(x),z(x))_\mathbf H]-[F(z(x+h))-F(z(x))]\\
             =&(Az(x),\eta(x,h))_\mathbf H+\frac{1}{2}(A\eta(x,h),\eta(x,h))_\mathbf H\\
              &-(F'(z(x)),\eta(x,h))_\mathbf H+o(\|\eta(x,h)\|_{\mathbf H}).
  \end{align*}
  From \eqref{eq-uniform astimate of eta} we have
  \[
   a(x+h)-a(x)=(Az(x)-F'(z(x)),\eta(x,h))_\mathbf H+o(\|h\|_\mathbf H),\;\;\forall x\in \mathbf H^0,\; {\rm and}\;\|h\|_\mathbf H\to 0.
  \]
 Since $z^\pm(x)$ is the solution of \eqref{eq-zpm(x)} and from the definition of $\eta(x,h)$, we have
 \[
 (Az(x)-F'(z(x)),\eta(x,h))_\mathbf H=(Az(x)-F'(z(x)),h)_\mathbf H,\;\;\forall x, h\in \mathbf H^0,
 \]
  so we have
  \[
  a(x+h)-a(x)=(Az(x)-F'(z(x)),h)_\mathbf H+o(\|h\|_\mathbf H),\;\;\forall x\in \mathbf H^0,\; {\rm and}\;\|h\|_\mathbf H\to 0,
  \]
  and we have proved \eqref{eq-saddle point reduction-the derivative of a}.  If $F\in C^2(\mathbf H,\mathbb R)$, from \eqref{eq-zpm(x)} and by Implicit function theorem, we have $z^\pm \in C^1(\mathbf H^0,\mathbf H^\pm)$.
  From \eqref{eq-zpm(x)} and\eqref{eq-saddle point reduction-the derivative of a}, we have
  \[
  a'(x)=Ax-P^0F(z(x))
  \]
   and
  \[
  a''(x)=A|_{\mathbf H_0}-P^0F''(z(x))z'(x),
  \]
 that is to say $a\in C^2(\mathbf H^0,\mathbb R)$. Finally, from Theorem \ref{thm-relative morse index and spectral flow} received above, Definition 2.8 and Lemma 2.9 in \cite{Wang-Liu-2016}, we have \eqref{eq-relation between morse index and our index}. \endproof
\subsection{Some abstract critical points Theorems}
In this part, we will give some  abstract critical points Theorems for (O.E.) by the method of saddle point reduction introduced above. Since we have Proposition \ref{prop-relations between indexes}, we will not distinguish $i^*_A(B)$ from $i_A(B)$.  Beside condition ($F_1$), assume $F$ satisfying the following condition.\\
 \noindent ($F_2$) There exist $B_1,B_2\in \mathcal{L}_s(\mathbf H,-b,b)$ and $B:\mathbf H\to \mathcal{L}_s(\mathbf H,-b,b)$ satisfying
\[
B_1\leq B_2,\;i_A(B_1)=i_A(B_2),\; {\rm and}\; \nu_A(B_2)=0,
\]
 \[
 B_1\leq B(z) \leq B_2,\forall z\in\mathbf H,
 \]
 such that
\[
F'(z)-B(z)z=o(\|z\|_\mathbf H),\|z\|_\mathbf H\to\infty.
\]
Before the following Theorem, we need a Lemma.
\begin{lem}\label{lem-0 has a positive distance from sigma(A-B)}
Let $B_1,B_2\in \mathcal{L}_s(\mathbf H,-b,b)$ with $B_1\leq B_2,\;i_A(B_1)=i_A(B_2),\; {\rm and}\; \nu_A(B_2)=0$, then there exists $\varepsilon>0$, such that for all $B\in\mathcal{L}_s(\mathbf H)$ with
\[
 B_1\leq B \leq B_2,
\]
we have
\[
\sigma(A-B)\cap (-\varepsilon,\varepsilon)=\emptyset.
\]
\end{lem}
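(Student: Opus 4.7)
The plan is to combine the monotonicity of the index pair $(i_A,\nu_A)$ with a finite-dimensional spectral reduction exploiting the essential-spectrum gap of $A$. The first step uses the monotonicity of $(i_A,\nu_A)$ on $\mathcal L_s(\mathbf H,-b,b)$ (as developed in \cite{Wang-Liu-2016,Wang-Liu-2017}): for any $B$ with $B_1\le B\le B_2$ the sandwich
\[
i_A(B_1)\le i_A(B)\le i_A(B)+\nu_A(B)\le i_A(B_2)+\nu_A(B_2)=i_A(B_1)
\]
forces $i_A(B)=i_A(B_1)$ and $\nu_A(B)=0$. Since the order interval $[B_1,B_2]$ is automatically inside $\mathcal L_s(\mathbf H,-b,b)$ with a uniform bound $\|B\|\le c:=\max(\|B_1\|,\|B_2\|)<b$, the preparatory material from Section \ref{section-relative Morse index} shows that $A-B$ is self-adjoint Fredholm at $0$ with $\sigma_e(A-B)\cap(-(b-c),b-c)=\emptyset$; combined with $\ker(A-B)=\{0\}$ this already yields $0\notin\sigma(A-B)$ for each individual $B$ in the interval.

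To upgrade to a uniform gap, I would argue by contradiction. Suppose there are $B_n\in[B_1,B_2]$ and $u_n\in D(A)$ with $\|u_n\|_{\mathbf H}=1$ and $\|(A-B_n)u_n\|_{\mathbf H}\to 0$. Then $\|Au_n\|_{\mathbf H}\le c+o(1)$. Pick $M\in(c,b)$; since $A\in\mathcal O^0_e(-b,b)$ the subspace $V(A,[-M,M])$ is finite-dimensional. Decomposing $u_n=v_n+w_n$ along the spectral projection $P(A,[-M,M])$ and using $\|Aw_n\|\ge M\|w_n\|$ gives $\|w_n\|\le c/M+o(1)<1$, so $\|v_n\|$ is bounded below. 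Finite-dimensional compactness extracts a subsequence with $v_n\to v_*\ne 0$ strongly; the Banach--Alaoglu theorem then furnishes further subsequences with $B_n\rightharpoonup B_*\in[B_1,B_2]$ in the weak operator topology and $u_n\rightharpoonup u_*$ weakly in $\mathbf H$, where $u_*$ inherits the nontrivial finite-dimensional piece $v_*$ so that $u_*\ne 0$; weak closedness of $\mathrm{graph}(A)$ places $u_*\in D(A)$.

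The hard part will be the final limit passage $(A-B_n)u_n\to 0\Rightarrow(A-B_*)u_*=0$, which would contradict $\nu_A(B_*)=0$ and close the argument. The obstacle is that WOT convergence of $B_n$ together with weak convergence of $u_n$ does not automatically give $B_nu_n\rightharpoonup B_*u_*$. My plan is to again use the splitting $B_nu_n=B_nv_n+B_nw_n$: the first summand converges weakly to $B_*v_*$ via the strong convergence $v_n\to v_*$ and the WOT of $B_n$; for the second summand, use that on $V(A,|t|>M)$ the operators $A-B_n$ are uniformly invertible (since $M>c\ge\|B_n\|$), so $w_n$ can be controlled strongly and made as small as one wishes by refining $M$. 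Reconciling these weak limits with the uniform spectral gap on the high-frequency part, in order to obtain $(A-B_*)u_*=0$, is where I expect the main technical work to sit.
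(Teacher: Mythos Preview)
Your first step (the monotonicity sandwich forcing $i_A(B)=i_A(B_1)$ and $\nu_A(B)=0$ for every $B\in[B_1,B_2]$) is correct and is exactly how the paper starts. The divergence comes in how the \emph{uniform} $\varepsilon$ is obtained.

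The paper does not use any compactness or contradiction argument. Instead it reuses the same monotonicity trick on the \emph{shifted} operators: since $\nu_A(B_1)=\nu_A(B_2)=0$, one can pick $\varepsilon>0$ so small that $B_1-\varepsilon I,\,B_2+\varepsilon I\in\mathcal L_s(\mathbf H,-b,b)$ and
\[
i_A(B_1-\varepsilon I)=i_A(B_1)=i_A(B_2)=i_A(B_2+\varepsilon I).
\]
For any $B\in[B_1,B_2]$ one then has $B_1-\varepsilon I\le B-\varepsilon I<B+\varepsilon I\le B_2+\varepsilon I$, so monotonicity gives $i_A(B-\varepsilon I)=i_A(B+\varepsilon I)$. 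The spectral-flow identity
\[
i_A(B+\varepsilon I)-i_A(B-\varepsilon I)=\sum_{-\varepsilon<t\le\varepsilon}\nu_A(B-tI)
\]
(from \cite{Wang-Liu-2016,Wang-Liu-2017}) then forces $\ker(A-B+tI)=0$ for all $t\in(-\varepsilon,\varepsilon)$, i.e.\ $\sigma(A-B)\cap(-\varepsilon,\varepsilon)=\emptyset$. The same $\varepsilon$ works for every $B$; no limit passage is needed.

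Regarding your route: the gap you flag is real, and your proposed fix does not close it. You cannot make $\|w_n\|$ arbitrarily small ``by refining $M$'' because $M$ is constrained to $(c,b)$, so at best $\|w_n\|\le c/M+o(1)\ge c/b>0$. Thus $w_n$ need not converge strongly, and the product $B_n u_n$ has a nontrivial high-frequency piece for which WOT of $B_n$ plus weak convergence of $w_n$ gives no control on $B_n w_n$. Without an extra hypothesis (e.g.\ compactness of $B_n-B_*$, which is precisely what the whole paper avoids), the passage $(A-B_n)u_n\to 0\Rightarrow(A-B_*)u_*=0$ is not justified. The paper's shift argument sidesteps this entirely and is the intended two-line proof.
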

{\noindent}{\bf Proof.} For the property of $i_A(B)$, we have $\nu_A(B_1)=0$. So there is $\varepsilon>0$, such that
\[
i_A(B_{1,\varepsilon})=i_A(B_1)=i_A(B_2)=i_A(B_{2,\varepsilon}),
\]
with $B_{*,\varepsilon}=B_*+\varepsilon\cdot I,(*=1,2)$. Since  $B_{1,\varepsilon}\leq B-\varepsilon I<B+\varepsilon I\leq B_2'$. It follows that $i_A(B-\varepsilon I)=i_A(B+\varepsilon I)$. Note that
\[
i_A(B+\varepsilon)-i_A(B-\varepsilon)=\sum_{-\varepsilon < t \le  \varepsilon } \nu_A(B-t \cdot I).
\]
We have $0\notin \sigma(A-B-\eta),\;\forall \eta\in(-\varepsilon,\varepsilon)$, thus the proof is complete.\endproof

 \begin{thm}\label{thm-abstract thm 1 for the existence of solution}
Assume  $A\in\mathcal O^0_e(-b,b)$. If $F$ satisfies conditions ($F_1$) and ($F_2$), then (O.E.) has at least one solution.
 \end{thm}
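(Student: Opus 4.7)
The strategy is to apply the saddle point reduction from Lemma~\ref{lem-the smoothness of a} to reduce (O.E.) to the problem of finding a critical point of the $C^1$ functional $a$ on the finite-dimensional space $\mathbf H^0$, and then to detect such a critical point by a Brouwer degree computation along a linear homotopy from $F$ to the quadratic model $z\mapsto\tfrac{1}{2}(B_2 z,z)_{\mathbf H}$. Since critical points of $a$ correspond one-to-one to solutions of (O.E.) and $\mathbf H^0$ is finite-dimensional, it suffices to produce a ball $B_R\subset\mathbf H^0$ with $\deg(a',B_R,0)\neq 0$.

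To set up the homotopy, define $F_t(z):=tF(z)+\tfrac{1-t}{2}(B_2 z,z)_{\mathbf H}$ for $t\in[0,1]$. The derivative $F_t'(z)=tF'(z)+(1-t)B_2 z$ is Lipschitz with constant $tl_F+(1-t)\|B_2\|<b$ uniformly in $t$, because $B_2\in\mathcal L_s(\mathbf H,-b,b)$ forces $\|B_2\|<b$. Picking a single $l\in(\max\{l_F,\|B_2\|\},b)$ with $\pm l\notin\sigma(A)$ (possible since $\sigma(A)\cap(-b,b)$ is discrete), the construction of Section~\ref{section-saddle point reduction} produces jointly continuous maps $(t,x)\mapsto z_t(x)$ and $a_t'(x)=A^0x-P^0_AF_t'(z_t(x))\in\mathbf H^0$ interpolating between $a_0'$ and $a_1'=a'$.

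The main step is a uniform a priori bound: there exists $R>0$ with $a_t'(x)\neq 0$ whenever $\|x\|_{\mathbf H}\geq R$ and $t\in[0,1]$. Suppose not; choose $t_n\in[0,1]$ and $x_n\in\mathbf H^0$ with $\|x_n\|_{\mathbf H}\to\infty$ and $a_{t_n}'(x_n)=0$, and set $z_n:=z_{t_n}(x_n)$ and $\tilde B_n:=t_n B(z_n)+(1-t_n)B_2$. The identity $Az_n=F_{t_n}'(z_n)$ together with $(F_2)$ yields
\[
(A-\tilde B_n)z_n=t_n\bigl(F'(z_n)-B(z_n)z_n\bigr)=o(\|z_n\|_{\mathbf H}).
\]
Since $B_1\leq B(z_n)\leq B_2$ the convex combination $\tilde B_n$ still lies in $[B_1,B_2]$, so Lemma~\ref{lem-0 has a positive distance from sigma(A-B)} supplies a uniform $\varepsilon>0$ with $\|(A-\tilde B_n)^{-1}\|\leq\varepsilon^{-1}$. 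The orthogonal splitting $z_n=x_n+z_n^++z_n^-$ gives $\|z_n\|_{\mathbf H}\geq\|x_n\|_{\mathbf H}\to\infty$, so the displayed identity forces $\|z_n\|_{\mathbf H}\leq\varepsilon^{-1}o(\|z_n\|_{\mathbf H})$, a contradiction.

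At the endpoint $t=0$ the functional $a_0(x)=\tfrac12((A-B_2)z_0(x),z_0(x))_{\mathbf H}$ is a nondegenerate quadratic form on $\mathbf H^0$, with nondegeneracy coming from $\nu_A(B_2)=0$ (applied through Lemma~\ref{lem-0 has a positive distance from sigma(A-B)}), which makes $z_0$ linear and injective on $\mathbf H^0$. Hence the Brouwer degree of $a_0'$ on $B_R$ equals $(-1)^{m^-_{a_0}(0)}\neq 0$. Homotopy invariance then gives $\deg(a',B_R,0)=\deg(a_0',B_R,0)\neq 0$, which produces a critical point of $a$ and therefore a solution of (O.E.). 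The main obstacle is the uniform a priori estimate: one must interpolate the spectral gap of Lemma~\ref{lem-0 has a positive distance from sigma(A-B)} along the whole convex family $\tilde B_n\in[B_1,B_2]$ while simultaneously controlling the $o(\|z\|_{\mathbf H})$ remainder of $(F_2)$ uniformly in $t\in[0,1]$.
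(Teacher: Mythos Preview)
Your proposal is correct and follows essentially the same route as the paper: a homotopy to a linear endpoint, a uniform a~priori bound on solutions via Lemma~\ref{lem-0 has a positive distance from sigma(A-B)}, and a Brouwer degree computation on $\mathbf H^0$. The only cosmetic differences are that the paper homotopes to $B_1$ rather than $B_2$ and obtains the contradiction by normalizing $y_n=z_n/\|z_n\|$ and splitting along $\mathbf H^\pm_{A-\bar B_n}$, whereas you invoke the inverse bound $\|(A-\tilde B_n)^{-1}\|\leq\varepsilon^{-1}$ directly; your version is in fact slightly cleaner.
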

 \noindent{\bf Proof.} Firstly, for $\lambda\in[0,1]$, consider the following equation
 \[
 Az=(1-\lambda) B_1z+\lambda F'(z).\eqno(O.E.)_\lambda
 \]
  We claim that the set of all the solutions ($z,\lambda$) of (O.E.)$_\lambda$ are a priori bounded. If not, assume there exist $\{(z_n,\lambda_n)\}$ satisfying (O.E.)$_\lambda$ with $\|z_n\|_{\mathbf H}\to\infty$. Without lose of generality, assume $\lambda_n\to\lambda_0\in[0,1]$. Denote by
\[
F_\lambda(z)=\frac{1-\lambda}{2}(B_1z,z)_{\mathbf H}+\lambda F(z),\;\forall z\in \mathbf H.
 \]
 Since $F$ satisfies condition ($F_1$) and $B_1\in \mathcal{L}_s(\mathbf H,-b,b)$, we have $F'_\lambda:\mathbf H\to\mathbf H$ is Lipschitz continuous with its Lipschitz constant less than $b$, that is to say there exists  $\hat{l}\in [\l_F,b)$ such that
 \[
 \|F'_\lambda(z+h)-F'_\lambda(z)\|_{\mathbf H}\leq \hat{l}\|h\|_{\mathbf H},\;\forall z,h\in\mathbf H,\lambda\in[0,1].
 \]
 Now, consider the projections defined in \eqref{eq-projections}, choose  $l\in (\hat{l},b)$ satisfying $-l,l\notin \sigma(A)$, from \eqref{eq-decomposition 1 of OE} and \eqref{eq-decomposition 2 of OE}, we decompose $z_n$ by
 \[
 z_n=z^+_n+z^-_n+x_n,
 \]
 with $z^*_n\in\mathbf H^*$($*=\pm,0$) and $z^{\pm}_n$ satisfies Proposition \ref{prop-continuous and property of saddle point reduction} with $l_F$ replaced by $\hat{l}$. So we have $\|x_n\|_\mathbf H\to\infty$. Denote by
 \[
 y_n=\frac{z_n}{\|z_n\|_\mathbf H},
 \]
 and $\bar{B}_n:=(1-\lambda_n)B_1+\lambda_nB(z_n)$, we have
\begin{equation}\label{eq-the equation of yn}
 Ay_n=\bar{B}_ny_n+\frac{o(\|z_n\|_\mathbf H)}{\|z_n\|_\mathbf H}.
 \end{equation}
Decompose $y_n=y^{\pm}_n+y^0_n$ with $y^*_n=z^*_n/\|z_n\|_\mathbf H$, we have
\begin{align*}
\|y^0_n\|_\mathbf H&=\frac{\|x_n\|_\mathbf H}{\|z_n\|_\mathbf H}\\
                   &\geq\frac{\|x_n\|_\mathbf H}{\|x_n\|_\mathbf H+\|z^+_n+z^-\|_\mathbf H}\\
                   &\geq\frac{(l-\hat{l})\|x_n\|_\mathbf H}{l\|x_n\|_\mathbf H+\|F'_\lambda(0)\|_\mathbf H}.
\end{align*}
That is to say
 \begin{equation}\label{eq-y0n not to 0}
\|y^0_n\|_\mathbf H\geq c>0
 \end{equation}
 for some constant $c>0$ and $n$ large enough.
 Since $B_1\leq B(z)\leq B_2$, we have $B_1\leq \bar{B}_n\leq B_2$.
 Let $\mathbf H=\mathbf H^+_{A-\bar B_n}\bigoplus \mathbf H^-_{A-\bar B_n}$ with $A-\bar B_n$ is positive and negative define on $\mathbf H^+_{A-\bar B_n}$ and $\mathbf H^-_{A-\bar B_n}$ respectively. Re-decompose $y_n=\bar{y}^+_n+\bar{y}^-_n$ respect to $\mathbf H^+_{A-\bar B_n}$ and $\mathbf H^-_{A-\bar B_n}$. From Lemma \ref{lem-0 has a positive distance from sigma(A-B)} and \eqref{eq-the equation of yn}, we have
 \begin{align}\label{eq-y0n to 0}
 \| y^0_n\|^2_\mathbf H&\leq  \| y_n\|^2_\mathbf H\nonumber\\
                     &\leq \frac{1}{\varepsilon} ((A-\bar{B}_n)y_n,\bar{y}^+_n+\bar{y}^-_n)_\mathbf H\nonumber\\
                     &\leq \frac{1}{\varepsilon} \frac{o(\|z_n\|_\mathbf H)}{\|z_n\|_\mathbf H}\|y_n\|_\mathbf H.
 \end{align}
 Since $\|z_n\|_\mathbf H\to \infty$ and $\|y_n\|=1$, we have $\|y^0_n\|_\mathbf H\to 0$ which  contradicts to \eqref{eq-y0n not to 0}, so we have $\{z_n\}$ is bounded.

 Secondly, we apply the topological degree theory to complete the proof. Since the solutions of (O.E.)$_\lambda$ are bounded, there is a number $R>0$ large eoungh,
 such that all of the solutions $z_\lambda$ of (O.E.)$_\lambda$ are in the ball $B(0,R):=\{z\in \mathbf H| \|z\|_\mathbf H<R\}$.  So we have the Brouwer degree
 \[
deg (a'_1,B(0,R)\cap \mathbf H^0,0)= deg (a'_0,B(0,R)\cap \mathbf H^0,0)\neq 0,
 \]
 where $a_\lambda(x)=\frac{1}{2}(Az_\lambda(x),z_\lambda(x))_\mathbf H-F_\lambda(z_\lambda(x))$, $\lambda\in[0,1]$. That is to say (O.E.) has at least one solution.
 \endproof

 In Theorem \ref{thm-abstract thm 1 for the existence of solution}, the non-degeneracy condition of $B(z)$ is important to keep the boundedness of the solutions. The following theorem will not need this non-degeneracy condition, the idea is from \cite{Ji-2018}.

 \begin{thm}\label{thm-abstract thm 3}
Assume  $A\in\mathcal O^0_e(-b,b)$. If $F$ satisfies conditions ($F_1$) and the following condition.\\
($F^\pm_2$) There exists $M>0$, $B_\infty\in\mathcal{L}_s(\mathbf H, -b,b)$, such that
\[
F'(z)=B_\infty z+r(z),
\]
with
\[
\|r(z)\|_\mathbf H\leq M,\;\;\forall z\in\mathbf H,
\]
and
\begin{equation}\label{eq-condition of r in ab-thm 3}
(r(z),z)_\mathbf H\to\pm\infty,\;\;\|z\|_\mathbf H\to\infty.
\end{equation}
 Then (O.E.) has at least one solution.
\end{thm}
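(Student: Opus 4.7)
The plan is to combine the saddle point reduction of Lemma \ref{lem-the smoothness of a} with a topological degree argument, adapting the scheme of Theorem \ref{thm-abstract thm 1 for the existence of solution} but replacing the non-degeneracy hypothesis $\nu_A(B_2)=0$ by the Landesman--Lazer type condition $(r(z),z)_\mathbf H\to\pm\infty$ supplied by $(F_2^\pm)$. The principal novelty lies in the a priori bound step, where a small resonance-breaking perturbation supported on $\ker(A-B_\infty)$ is inserted into the homotopy to bypass the possible resonance of $A-B_\infty$ at zero.

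First I would set $K:=\ker(A-B_\infty)$, which is finite dimensional because $A-B_\infty$ is self-adjoint Fredholm with $0\notin\sigma_e(A-B_\infty)$ (the hypothesis $A\in\mathcal O^0_e(-b,b)$ together with $\|B_\infty\|<b$ prevents the essential spectrum from reaching $0$). Let $P_K$ be the orthogonal projection onto $K$, and fix $\mu\neq 0$ whose sign matches the direction in $(F_2^\pm)$ (take $\mu>0$ when $(r(z),z)_\mathbf H\to+\infty$ and $\mu<0$ when $(r(z),z)_\mathbf H\to-\infty$), with $|\mu|$ so small that $\|B_\infty\|+|\mu|<b$. I would then consider the homotopy
\[
Az=(1-\lambda)(B_\infty+\mu P_K)z+\lambda F'(z),\qquad \lambda\in[0,1],
\]
which I denote $(O.E.)_\lambda$; note $(O.E.)_1=(O.E.)$. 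Because $F_\lambda'(z):=(1-\lambda)(B_\infty z+\mu P_K z)+\lambda F'(z)$ is Lipschitz with constant at most $(1-\lambda)(\|B_\infty\|+|\mu|)+\lambda l_F<b$, Lemma \ref{lem-the smoothness of a} applies uniformly and produces a continuous family $a_\lambda\in C^1(\mathbf H^0,\mathbb R)$ whose critical points correspond to solutions of $(O.E.)_\lambda$.

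The heart of the proof is the uniform a priori bound. Suppose for contradiction that $(z_n,\lambda_n)$ solve $(O.E.)_{\lambda_n}$ with $\|z_n\|_\mathbf H\to\infty$. Decompose $z_n=k_n+w_n$ with $k_n=P_Kz_n\in K$ and $w_n\in K^\bot\cap D(A)$. Since $A-B_\infty$ vanishes on $K$ and has a spectral gap $\delta>0$ about $0$ on $K^\bot$, projecting $(O.E.)_{\lambda_n}$ onto $K$ yields $-(1-\lambda_n)\mu k_n=\lambda_n P_K r(z_n)$, so $\|k_n\|_\mathbf H\leq \lambda_n M/((1-\lambda_n)|\mu|)$; projecting onto $K^\bot$ yields $(A-B_\infty)w_n=\lambda_n(I-P_K)r(z_n)$, so $\|w_n\|_\mathbf H\leq \lambda_n M/\delta$. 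Pairing the equation with $z_n$ and using $(A-B_\infty)k_n=0$ together with $w_n\bot K$ then gives
\[
\lambda_n(r(z_n),z_n)_\mathbf H=((A-B_\infty)w_n,w_n)_\mathbf H-(1-\lambda_n)\mu\|k_n\|_\mathbf H^2,
\]
in which the first right-hand term is $O(\lambda_n^2)$. The sign of $\mu$ was chosen precisely so that $-(1-\lambda_n)\mu\|k_n\|_\mathbf H^2$ lies on the opposite side of the identity from $\lambda_n(r(z_n),z_n)_\mathbf H$; hence for $\lambda_n$ bounded away from $0$ the identity forces $(r(z_n),z_n)_\mathbf H$ to stay bounded on one side, contradicting $(F_2^\pm)$ as $\|z_n\|_\mathbf H\to\infty$, while for $\lambda_n$ bounded away from $1$ the projection bounds make $\|z_n\|_\mathbf H$ bounded outright. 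Together the two regimes rule out $\|z_n\|_\mathbf H\to\infty$.

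With uniform bounds in hand, choose $R>0$ larger than $\|x\|_\mathbf H$ at every critical point of every $a_\lambda$ (Proposition \ref{prop-continuous and property of saddle point reduction} transfers bounds between $z$ and $x$), so that $\deg(a_\lambda',B(0,R)\cap\mathbf H^0,0)$ is well defined and invariant along $\lambda$. At $\lambda=0$ the problem is linear, $(A-B_\infty-\mu P_K)z=0$; writing $z=k+w$ gives $(A-B_\infty)w=\mu k$ with the left side in $K^\bot$ and the right in $K$, forcing both to vanish and hence $z=0$. The quadratic $a_0$ therefore has the origin as its unique critical point with nonsingular Hessian, so $\deg(a_0',B(0,R)\cap\mathbf H^0,0)=\pm 1\neq 0$, and homotopy invariance yields a critical point of $a_1$, which by Lemma \ref{lem-the smoothness of a} is a solution of $(O.E.)$. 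The main obstacle I anticipate is the a priori bound step: one must match the sign of the resonance-breaking perturbation $\mu P_K$ to the direction in $(F_2^\pm)$ so that the Landesman--Lazer condition controls $\lambda_n\to 1$ (where the $P_K$-projection bound on $\|k_n\|$ degenerates), while the projection bound handles $\lambda_n\to 0$ (where $(r(z),z)_\mathbf H$ need not be large).
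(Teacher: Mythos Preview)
Your argument is correct, but it differs substantially from the paper's. The paper does \emph{not} run a single homotopy from a perturbed linear problem to $(O.E.)$; instead it regularises the full equation by adding $\varepsilon I$, uses the non-degenerate case (Theorem \ref{thm-abstract thm 1 for the existence of solution}) to produce a solution $z_\varepsilon$ of $(\varepsilon+A-B_\infty)z_\varepsilon=r(z_\varepsilon)$, then proves a uniform bound $\|z_\varepsilon\|_\mathbf H\le C$ via the Landesman--Lazer condition, and finally extracts a convergent subsequence as $\varepsilon\to 0$ by exploiting the saddle-point structure (the $\mathbf H^0$-component converges by finite dimensionality, the $\mathbf H^\pm$-components by the contraction property of Proposition \ref{prop-continuous and property of saddle point reduction}). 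Your route instead perturbs only the linear endpoint by $\mu P_K$ with the sign of $\mu$ matched to $(F_2^\pm)$, obtains a priori bounds along the whole homotopy, and invokes degree invariance directly; this spares you the limit-extraction step entirely and stays closer in spirit to Theorem \ref{thm-abstract thm 1 for the existence of solution}. The paper's scheme, on the other hand, is the classical ``regularise and pass to the limit'' paradigm and makes the role of the contraction structure in the $\mathbf H^\pm$-directions more explicit. Both strategies hinge on the same mechanism---boundedness of the $K^\bot$-component coming from the spectral gap of $A-B_\infty$ on $K^\bot$, and boundedness of the $K$-component forced by $(r(z),z)\to\pm\infty$---but package it differently. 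One minor point: your case split ``$\lambda_n$ bounded away from $0$'' versus ``bounded away from $1$'' is slightly redundant; it is cleaner to note that $\|w_n\|\le M/\delta$ always, so $\|z_n\|\to\infty$ forces $\|k_n\|\to\infty$ and hence $\lambda_n\to 1$, after which the signed identity $\lambda_n(r(z_n),z_n)=((A-B_\infty)w_n,w_n)-(1-\lambda_n)\mu\|k_n\|^2$ gives the contradiction directly.
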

\pf If $0\not\in \sigma(A-B_\infty)$, then with the similar method in Theorem \ref{thm-abstract thm 1 for the existence of solution}, we can prove the result.
So we assume $0\in \sigma(A-B_\infty)$ and  we only consider the case of ($F^-_2$).   Since $0$ is an isolate eigenvalue of $A-B_\infty$ with finite dimensional eigenspace (see \cite{Wang-Liu-2016} for details), there exists $\eta>0$ such that
\[
(-\eta,0)\cap\sigma(A-B_\infty)=\emptyset.
\]
 For any $\varepsilon\in (0,\eta)$, we have
  $0\not\in \sigma(\varepsilon+A-B_\infty)$. Thus, with the similar method in Theorem \ref{thm-abstract thm 1 for the existence of solution},
we can prove that there exists  $z_\varepsilon\in \mathbf H$ satisfying the following equation
\begin{equation}\label{eq-equation of z-varepsilon}
\varepsilon z_\varepsilon+(A-B_\infty)z_\varepsilon=r(z_\varepsilon).
\end{equation}
In what follows, We divide the following proof into two steps and  $C$  denotes various constants independent of $\varepsilon$.

{\bf Step 1. We claim that  $\|z_\varepsilon\|_\mathbf H\leq C$. } Since $z_\varepsilon$ satisfies the above equation,  we have
\begin{align*}
\varepsilon (z_\varepsilon,z_\varepsilon)_\mathbf H&=-((A-B_\infty)z_\varepsilon,z_\varepsilon)_\mathbf H+(r(z_\varepsilon),z_\varepsilon)_\mathbf H\\
                                                   &\leq \frac{1}{\eta}\|(A-B_\infty)z_\varepsilon\|^2_\mathbf H+M\|z_\varepsilon\|_\mathbf H\\
                                                   &=\frac{1}{\eta}\|\varepsilon z_\varepsilon-r(z_\varepsilon)\|^2_\mathbf H+M\|z_\varepsilon\|_\mathbf H\\
                                                   &\leq \frac{\varepsilon^2}{\eta}\|z_\varepsilon\|^2_\mathbf H+C\|z_\varepsilon\|_\mathbf H +C.
\end{align*}
  So we have
\[
\varepsilon\|z_\varepsilon\|_\mathbf H\leq C.
\]
 Therefore
\begin{equation}\label{eq-boundedness of z-1}
\|(A-B_\infty)z_\varepsilon\|_\mathbf H=\|\varepsilon z_\varepsilon-r(z_\varepsilon)\|_\mathbf H\leq C.
\end{equation}
Now, consider the orthogonal splitting as defined in \eqref{eq-decomposition of space H},
\[
\mathbf H=\mathbf H^0_{A-B_\infty}\oplus\mathbf H^*_{A-B_\infty},
\]
where $A-B_\infty$ is zero definite on $\mathbf H^0_{A-B_\infty}$,  $\mathbf H^*_{A-B_\infty}$ is the orthonormal complement space of $\mathbf H^0_{A-B_\infty}$. Let $z_\varepsilon=u_\varepsilon+v_\varepsilon$ with $u_\varepsilon\in \mathbf H^0_{A-B_\infty}$ and $v_\varepsilon\in \mathbf H^*_{A-B_\infty}$. Since $0$ is an isolated point in $\sigma(A-B_\infty)$, from \eqref{eq-boundedness of z-1}, we have
\begin{equation}\label{eq-boundedness of z-2}
\|v_\varepsilon\|_\mathbf H\leq C
\end{equation}
Additionally, since $r(z)$ and $v_\varepsilon$ are bounded, we have
\begin{align}\label{eq-boundedness of z-3}
(r(z_\varepsilon),z_\varepsilon)_\mathbf H&=(r(z_\varepsilon),v_\varepsilon)_\mathbf H+(r(z_\varepsilon),u_\varepsilon)_\mathbf H\nonumber\\
                                           &=(r(z_\varepsilon),v_\varepsilon)_\mathbf H+(\varepsilon z_\varepsilon+(A-B_\infty)z_\varepsilon,u_\varepsilon)_\mathbf H\nonumber\\
                                                                                      &=(r(z_\varepsilon),v_\varepsilon)_\mathbf H+\varepsilon(u_\varepsilon,u_\varepsilon)_\mathbf H\nonumber\\
                                           &\geq C.
\end{align}
Therefor, from \eqref{eq-condition of r in ab-thm 3},  $\|u_\varepsilon\|_\mathbf H$ are bounded in $\mathbf H$ and we have proved the boundedness of $\|z_\varepsilon\|_\mathbf H$.

{\bf Step 2. Passing to a sequence of $\varepsilon_n\to 0$,  there exists $z\in\mathbf H$
such that
\[
\displaystyle\lim_{\varepsilon_n\to 0}\|z_{\varepsilon_n}-z\|_\mathbf H=0.
\]
}
Different from the above splitting, now, we recall the projections $P^-_A,\;P^0_A$ and $P^+_A$ defined in \eqref{eq-projections} and the splitting $\mathbf H=\mathbf H^-\oplus\mathbf H^0\oplus\mathbf H^+$ with $\mathbf H^*=P^*_A$($*=\pm,0$).
So $z_\varepsilon$ has the corresponding splitting
\[
z_\varepsilon=z_\varepsilon^++z_\varepsilon^-+z_\varepsilon^0,
\]
 with $z_\varepsilon^*\in \mathbf H^*$ respectively.
Since $\mathbf H^0$  is a finite dimensional space and $\|z_\varepsilon\|_\mathbf H\leq C$,  there exists a sequence $\varepsilon_n\to 0$ and $z^0\in \mathbf H^0$, such that
\[
\displaystyle\lim_{n\to\infty}z^0_{\varepsilon_n}=z^0.
\]
 For simplicity, we rewrite $z^*_n:=z^*_{\varepsilon_n}$, $A_n:=\varepsilon_n+A$ and $A^\pm_n:=A_n|_{\mathbf H^\pm} $.
Since  $z_\varepsilon$ satisfies \eqref{eq-equation of z-varepsilon}, we have
\[
z^\pm_n=(A_n^\pm)^{-1}P^\pm_A F'(z^+_n+z^-_n +z^0_n).
\]
 Since $F$ satisfies ($F_1$), with the similar method used in Proposition \ref{prop-continuous and property of saddle point reduction}, for $n$ and $m$ large enough, we have
 \begin{align*}
\|z^\pm_n-z^\pm_m\|_\mathbf H=&\|(A_n^\pm)^{-1}P^\pm_A F'(z_n)-(A_m^\pm)^{-1}P^\pm_A F'(z_m)\|_\mathbf H \\
\leq&\|(A_n^\pm)^{-1}P^\pm_A (F'(z_n)-F'(z_m))\|_\mathbf H+\|((A_n^\pm)^{-1}-(A_m^\pm)^{-1})P^\pm_A F'(z_m)\|_\mathbf H \\
\leq&\frac{l_F}{l}\|z_n-z_m\|_\mathbf H+\|((A_n^\pm)^{-1}-(A_m^\pm)^{-1})P^\pm_A F'(z_m)\|_\mathbf H.
\end{align*}
Since $(A_n^\pm)^{-1}-(A_m^\pm)^{-1}=(\varepsilon_m-\varepsilon_n)(A_n^\pm)^{-1}(A_m^\pm)^{-1}$ and $z_n$ are bounded in $\mathbf H$, we have
\[
\|((A_n^\pm)^{-1}-(A_m^\pm)^{-1})P^\pm_A F'(z_m)\|_\mathbf H=o(1),\;\;n,m\to\infty.
\]
So we have
\[
\|z^\pm_n-z^\pm_m\|_\mathbf H\leq \frac{l_F}{l-l_F}\|z^0_n-z^0_m\|_\mathbf H+o(1),\;\;n,m\to\infty,
\]
therefor, there exists $z^\pm\in\mathbf H^\pm$, such that $\displaystyle\lim_{n\to\infty}\|z^\pm_n- z^\pm\|_\mathbf H=0$. Thus, we have
\[
\displaystyle\lim_{n\to\infty}\|z_{\varepsilon_n}-z\|_\mathbf H=0,
\]
with $z=z^-+z^++z^0$.
Last, let $n\to\infty$ in \eqref{eq-equation of z-varepsilon}, we have $z$ is a solution of (O.E.).\endproof
\begin{thm}\label{thm-abstract thm 2 for the multiplicity of solutions}
 Assume  $A\in\mathcal O^0_e(-b,b)$, $F$  satisfies ($F_1$) with $\pm l_F\not\in\sigma(A)$ and   the following condition:\\
($F^+_3$) There exist $B_3\in\mathcal{L}_s(\mathbf{H},-b,b)$ and $C\in\mathbb{R}$, such that
\[
B_3>\beta:=\max\{\lambda|\lambda\in \sigma_A\cap(-\infty,l_F)\},
\]
with
\[
F(z)\geq\frac{1}{2}(B_3z,z)_\mathbf{H}-C,\;\;\forall z\in \mathbf H.
\]
Or ($F^-_3$) There exist $B_3\in\mathcal{L}_s(\mathbf{H},-b,b)$ and $C\in\mathbb{R}$, such that
\[
B_3<\alpha:=\min\{\lambda|\lambda\in \sigma_A\cap(-l_F,\infty)\},
\]
with
\[
F(z)\leq\frac{1}{2}(B_3z,z)_\mathbf{H}+C,\;\;\forall z\in \mathbf H.
\]
Then (O.E.) has  at least one solution. Further more, assume $F$ satisfies \\
($F^\pm_4$)  $F\in C^2(\mathbf H,\mathbb{R})$, $F'(0)=0$ and there exists $B_0\in\mathcal{L}_s(\mathbf{H},-b,b)$ with
\begin{equation}\label{eq-twisted condition 1 in abstract thm 2}
\pm(i_A(B_0)+\nu_A(B_0))<\pm i_A(B_3),
\end{equation}
such that
\[
F'(z)=B_0z+o(\|z\|_\mathbf H),\;\;\|z\|_\mathbf H\to 0.
\]
Then (O.E.) has at least one nontrivial solution. Additionally, if
\begin{equation}\label{eq-twisted condition 2 in abstract thm 2}
\nu_A(B_0)=0
\end{equation}
then (O.E.) has at least two nontrivial solutions.\end{thm}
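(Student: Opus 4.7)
The plan is to convert (O.E.) into the finite-dimensional variational problem provided by Lemma~\ref{lem-the smoothness of a}: find critical points of $a \in C^1(\mathbf H^0, \mathbb R)$, where $\mathbf H^0 = P(A,(-l,l))\mathbf H$ is finite-dimensional because $\sigma_e(A) \cap (-b,b) = \emptyset$ forces $\sigma(A) \cap (-l,l)$ to consist of finitely many eigenvalues of finite multiplicity. On this reduction I would use classical minimax / Brouwer degree for the existence, and then compare the Brouwer degree of $a'$ on a small ball around $0$ against that on a large ball to produce the nontrivial critical points, using Lemma~\ref{lem-the smoothness of a} and Proposition~\ref{prop-relations between indexes} to identify both degrees with the index $i_A(\cdot)$.

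For the existence half, assume $(F^+_3)$. Combining $F(z) \ge \tfrac12(B_3 z,z)_{\mathbf H} - C$ with the definition of $a$ gives
\[
a(x) \le \tfrac12\bigl((A-B_3)z(x),z(x)\bigr)_{\mathbf H} + C,
\]
and the linear-growth estimate for $z^\pm(x)$ in terms of $x$ from Proposition~\ref{prop-continuous and property of saddle point reduction} lets us pass this inequality to the $\mathbf H^0$-coordinate. Because $B_3 > \beta$, decomposing $\mathbf H^0$ along the $A$-eigenspaces with eigenvalues below and above $l_F$ shows that $a(x) \to -\infty$ on the ``$\beta$-subspace'' while remaining bounded below on its complement, modulo the lower-order term controlled by $\|F'(0)\|_{\mathbf H}$. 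A finite-dimensional saddle-point theorem --- equivalently, computing that the Brouwer degree of $a'$ on a sufficiently large ball equals $(-1)^{i_A(B_3)} \neq 0$ via the homotopy $F_s(z) = (1-s)\tfrac12(B_3 z, z)_{\mathbf H} + s F(z)$ exactly as in Theorem~\ref{thm-abstract thm 1 for the existence of solution} --- then yields the desired critical point. The case $(F^-_3)$ is symmetric.

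For the nontrivial solution under $(F^\pm_4)$, the hypothesis $F'(0)=0$ forces $z^\pm(0)=0$ by uniqueness of the fixed point in \eqref{eq-zpm(x)}, so $0$ is a critical point of $a$ with $F''(z(0))=B_0$. By Lemma~\ref{lem-the smoothness of a} the Brouwer degree of $a'$ on a small ball around $0$ is determined by $i_A(B_0)$ and the null contribution $\nu_A(B_0)$, while the ``degree at infinity'' equals $(-1)^{i_A(B_3)}$ by the existence step. The twist inequality \eqref{eq-twisted condition 1 in abstract thm 2} forces these two degrees to differ even after absorbing the $\nu_A(B_0)$-ambiguity, so excision produces a critical point in the annulus, i.e., a nontrivial solution. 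When additionally $\nu_A(B_0)=0$, the critical point $0$ is isolated and non-degenerate, Lemma~\ref{lem-the smoothness of a} upgrades $a$ to $C^2$, and a standard three-critical-points / Morse-inequality argument on the finite-dimensional $\mathbf H^0$ (using that the local degrees at the nontrivial critical points must add up to $(-1)^{i_A(B_3)}-(-1)^{i_A(B_0)}$) produces a second nontrivial critical point.

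The main obstacle is the a priori bound supporting the ``degree at infinity equals $(-1)^{i_A(B_3)}$'' claim: unlike in Theorem~\ref{thm-abstract thm 1 for the existence of solution}, $(F^\pm_3)$ controls only the size of $F$ (not of $F'$) asymptotically, so one must run the homotopy $F_s$ above and verify that all solutions of the intermediate equation $Az=(1-s)B_3z+sF'(z)$ stay in a fixed ball by the blow-up/contradiction argument of Theorem~\ref{thm-abstract thm 1 for the existence of solution} together with the spectral gap from Lemma~\ref{lem-0 has a positive distance from sigma(A-B)}. This step is precisely where the assumption $\pm l_F\notin\sigma(A)$ is indispensable, since it keeps the homotopy within the admissible class $\mathcal{L}_s(\mathbf H,-b,b)$ where the index $i_A$ and the degree identification stay valid.
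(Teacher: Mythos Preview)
Your degree-comparison strategy is a reasonable instinct, but it diverges from the paper's argument and, more importantly, the ``obstacle'' you flag in the last paragraph is real and is not overcome by the hypotheses. The paper does \emph{not} compute a degree at infinity via a homotopy $Az=(1-s)B_3z+sF'(z)$; such a homotopy would indeed require an a priori bound on solutions, and $(F^\pm_3)$---which controls only $F$, not $F'$---does not furnish one. Your fallback suggestion of a saddle-point/linking structure (``$a\to-\infty$ on the $\beta$-subspace, bounded below on its complement'') is also off: with the paper's choice of $l$ (so that $[l_F,l]\cap\sigma(A)=\emptyset$), every $A$-eigenvalue on $\mathbf H^0$ already lies at or below $\beta$, so there is no ``complement''.

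The paper instead proves outright that $a(x)\to-\infty$ as $\|x\|_{\mathbf H}\to\infty$, so the first critical point is simply a global maximum. The key step you are missing is how to dispose of the positive contribution from $z^+$: your inequality $a(x)\le\tfrac12((A-B_3)z(x),z(x))+C$ is not enough because $(A-B_3)$ is large positive on $\mathbf H^+$. The paper writes $w=x+z^-(x)$, splits
\[
a(x)=\Bigl\{\tfrac12(Aw,w)-F(w)\Bigr\}+\Bigl\{\tfrac12\bigl[(Az,z)-(Aw,w)\bigr]-\bigl[F(z)-F(w)\bigr]\Bigr\},
\]
bounds the first brace by $\tfrac12((\beta-B_3)w,w)+C$ using $(F^+_3)$ and $(Aw,w)\le\beta\|w\|^2$, and---this is the crucial computation---controls the second brace by inserting the fixed-point equation $Az^+=P^+_AF'(z)$:
\[
\tfrac12(Az^+,z^+)-\int_0^1(F'(sz^++w),z^+)\,ds\le -\tfrac{l-l_F}{2}\|z^+\|^2_{\mathbf H}.
\]
Thus both braces are nonpositive quadratic and $a$ is anti-coercive. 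The multiplicity then follows without any degree bookkeeping: under $(F^+_4)$ the twist condition \eqref{eq-twisted condition 1 in abstract thm 2} together with Lemma~\ref{lem-the smoothness of a} forces the Morse index of $a$ at $0$ to be strictly smaller than $\dim\mathbf H^0$, so $0$ cannot be the maximum just found; and when $\nu_A(B_0)=0$, the origin is a nondegenerate critical point that is neither a max nor a min, so the classical three-critical-points theorem yields the second nontrivial solution.
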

\pf  We only consider the case of ($F^+_3$). According to the saddle point reduction, since $\pm l_F\not\in\sigma(A)$, we can choose $l\in(l_F,b)$ in \eqref{eq-projections} satisfying
\[
[-l,-l_F]\cap\sigma(A)=\emptyset=[l_F,l]\cap\sigma(A).
\]
We turn to the function
\[
a(x)=\frac{1}{2}(Az(x),z(x))-F(z(x)),
\]
where $z(x)=x+z^+(x)+z^-(x)$, $x\in \mathbf H^0$ and $z^\pm\in \mathbf H^\pm$. Denote by $w(x)=x+z^-(x)$ and write $z=z(x)$, $w=w(x)$ for simplicity. Since
\begin{equation}\label{eq-eq 1 in abstract thm 2}
a(x)=\left\{\frac{1}{2}(Aw,w)-F(w)\right\}+\left\{\frac{1}{2}[(Az,z)-(Aw,w)]-[F(z)-F(w)]\right\}.
\end{equation}
 By condition ($F^+_3$), we obtain
\begin{equation}\label{eq-eq 2 in abstract thm 2}
\frac{1}{2}(Aw,w)-F(w)\leq \frac{1}{2}((\beta-B_3)w,w)_\mathbf{H}+C,
\end{equation}
and  the terms in the second bracket are equal to
\begin{align}\label{eq-eq 3 in abstract thm 2}
&\frac{1}{2}(Az^+,z^+)-\int^{1}_{0}(F'(sz^++w),z^+)ds\nonumber\\
=&\frac{1}{2}(Az^+,z^+)-(F'(z^++w),z^+)+\int^{1}_{0}(F'(z^++w)-F'(sz^++w),z^+)ds\nonumber\\
=&-\frac{1}{2}(Az^+,z^+)+\int^{1}_{0}(F'(z^++w)-F'(sz^++w),z^+)ds\nonumber\\
\leq&-\frac{1}{2}(Az^+,z^+)+\int^{1}_{0}(1-s)ds\cdot l_F\cdot\|z^+\|^2_\mathbf H\nonumber\\
\leq& -\frac{l-l_F}{2}\|z^+\|^2_\mathbf H,
\end{align}
where the last equality is from the fact that $Az^+=P^+F'(z^++w)$. From \eqref{eq-eq 1 in abstract thm 2},\eqref{eq-eq 2 in abstract thm 2} and \eqref{eq-eq 3 in abstract thm 2} we have

\begin{align*}
a(x)&\leq \frac{1}{2}((\beta-B_3)w,w)_\mathbf{H} -\frac{l-l_F}{2}\|z^+\|^2_\mathbf H+C\\
    &\to-\infty,\; as \|x\|\to\infty.
\end{align*}
 Thus the function $-a(x)$ is bounded from below and satisfies the (PS) condition. So the maximum of $a$ exists and the maximum points are critical points of $a$.

 In order to prove the second part, similarly, we only consider the case of ($F^+_3$) and ($F^+_4$). We only need to realize that $0$ is not a maximum point from \eqref{eq-twisted condition 1 in abstract thm 2}, so the maximum points discovered above are not $0$. In the last, if \eqref{eq-twisted condition 2 in abstract thm 2} is satisfied, we can use the classical three critical points theorem, since $0$ is neither a maximum nor degenerate and the proof is complete. \endproof

\begin{rem}
(A). Theorem \ref{thm-abstract thm 2 for the multiplicity of solutions} is generalized from \cite[IV,Theorem 2.3]{Chang-1993}. In the first part of our Theorem, we do not need $F$ to be $C^2$ continuous.\\
(B). Theorem \ref{thm-abstract thm 2 for the multiplicity of solutions}  is different from our former result in \cite[Theorem 3.6]{Wang-Liu-2016}.
Here, we need the Lipschitz condition to keep the method of saddle point reduction valid, where, in \cite[Theorem 3.6]{Wang-Liu-2016}, in order to use the method of dual variation, we need the convex property.
\end{rem}
 \section{Applications in one dimensional wave equation}\label{section-applications}
In this section,  we will consider the following one dimensional wave equation
\[
\left\{\begin{array}{ll}
       \Box u\equiv u_{tt}-u_{xx}=f(x, t, u),\\
                  u(0,t)=u(\pi, t)= 0, \\
                         u(x, t+T)=u(x, t),\\
       \end{array}
\right.\forall (x, t)\in[0,\pi]\times S^1, \eqno(W.E.)
\]
where $T>0$, $S^1:=\mathbb{R}/T\mathbb{Z}$ and $f:[0,\pi]\times S^1\times\mathbb{R}\to\mathbb{R}$.
In what follows we assume systematically that $T$ is a rational multiple of $\pi$.
So, there exist coprime integers $(p,q)$, such that
\[
 T=\frac{2\pi q}{p}.
\]
Let
\[
 L^2:=\left\{u,u=\sum_{j\in\mathbb{N}^+,k\in\mathbb{Z}} u_{j,k}\sin jx\exp ik\frac{p}{q}t\right\},
\]
where $i=\sqrt{-1}$ and $u_{j,k}\in \mathbb{C}$ with $u_{j,k}=\bar{u}_{j,-k}$, its inner product is
\[
(u,v)_2=\sum_{j\in\mathbb{N}^+,k\in\mathbb{Z}}(u_{j,k},\bar{v}_{j,k}),\;u,v\in L^2,
\]
the corresponding norm is
\[
 \|u\|^2_2=\sum_{j\in\mathbb{N}^+,k\in\mathbb{Z}}|u_{j,k}|^2\;u,v\in L^2.
\]

Consider $\Box$ as an unbounded self-adjoint operator on $L^2$. Its' spectrum set is
\[
\sigma(\Box)=\{(p^2k^2-q^2j^2)/q^2|j\in\mathbb{N}^+,k\in\mathbb{Z}\}.
\]
It is easy to see $\Box$ has only one essential spectrum $\lambda_0=0$. Let $\Omega:=[0,\pi]\times S^1$, assume $f$ satisfying the following conditions.

\noindent($f_1$) $f\in C(\Omega\times\mathbb{R},\mathbb{R})$, there exist $ b\neq 0$ and $l_F\in(0,|b|)$, such that
\[
|f_{ b}(x,t,u+v)-f_{ b}(x,t,u)|\leq l_F|v|,\;\;\forall (x,t)\in \Omega,\;u,v\in\mathbb{R},
\]
where
\[
f_{ b}(x,t,u):=f(x,t,u)-bu,\;\;\forall (x,t,u)\in\Omega\times \mathbb{R}.
\]
Let the working space $\mathbf H:=L^2$ and the operator $A:=\Box-b\cdot I$, with $I$ the identity map on $\mathbf H$. Thus we have $A\in\mathcal O^0_e(-|b|,|b|)$. Denote $L^\infty:=L^\infty(\Omega, \mathbb{R})$  the set of all essentially bounded functions. For any $g\in L^\infty$, it is easy to see $g$ determines a bounded self-adjoint
operator on $L^2$, by
\[
u(x,t)\mapsto g(x,t)u(x,t),\;\;\forall u\in L^2,
\]
without confusion, we still denote this operator by $g$, that is to say we have the continuous embedding $L^\infty\hookrightarrow \mathcal L_s(\mathbf H)$. Thus for any $g\in L^\infty\cap \mathcal L_s(\mathbf H,-|b|,|b|)$,  we have the index pair ($i_A(g),\nu_A(g)$). Besides, for any $g_1,g_2\in L^\infty$, $g_1\leq g_2$ means that
\[
g_1(x,t)\leq g_2(x,t),\;{\rm a.e.} (x,t)\in\Omega.
\]
\noindent ($f_2$) There exist $g_1,g_2\in L^\infty\cap \mathcal L_s(\mathbf H,-|b|,|b|)$ and $g\in L^\infty(\Omega\times \mathbb{R},\mathbb{R})$, with
\[
g_1\leq g_2,\;i_A(g_1)=i_A(g_2),\;\nu_A(g_2)=0,
\]
\[
g_1(x,t)\leq g(x,t,u)\leq g_2(x,t),\;\;\;{\rm a.e.} (x,t,z)\in\Omega\times\mathbb{R},
\]
such that
\[
f_{ b}(x,t,u)-g(x,t,u)u=o(|u|),\;|u|\to\infty,\;{\rm uniformly for }(x,t)\in \Omega.
\]

We have the following results.
\begin{thm}\label{thm-application-1}
Assume  $T$ is a rational multiple of $\pi$, $f$ satisfying  ($f_1$) and ($f_2$), then (W.E.) has a weak solution.
\end{thm}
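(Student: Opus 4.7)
The strategy is to recast (W.E.) as the abstract operator equation $Au = F'(u)$ on $\mathbf H := L^2$ and then invoke Theorem \ref{thm-abstract thm 1 for the existence of solution}. First I would set $A := \Box - b \cdot I$; since $\sigma(\Box)$ has $0$ as its only essential spectrum point and $b \neq 0$, this gives $A \in \mathcal O^0_e(-|b|,|b|)$ as already observed in the text. Defining the primitive
\[
\Phi(x,t,s) := \int_0^s f_b(x,t,\tau)\,d\tau, \qquad F(u) := \int_\Omega \Phi(x,t,u(x,t))\,dx\,dt,
\]
the Nemytskii operator $F'(u)(x,t) = f_b(x,t,u(x,t))$ represents $F' : \mathbf H \to \mathbf H$, and weak solutions of (W.E.) correspond exactly to solutions of $Au = F'(u)$ in $D(A)$.

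Next I would verify $(F_1)$. From $(f_1)$, for every $u,h \in \mathbf H$,
\[
\|F'(u+h) - F'(u)\|_\mathbf H^2 = \int_\Omega |f_b(x,t,u+h) - f_b(x,t,u)|^2\,dx\,dt \leq l_F^2\,\|h\|_\mathbf H^2,
\]
so $F'$ is $l_F$-Lipschitz with $l_F < |b|$, exactly as required.

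For $(F_2)$, I would take $B_1, B_2 \in \mathcal L_s(\mathbf H, -|b|,|b|)$ to be the multiplication operators induced by $g_1, g_2$ via the embedding $L^\infty \hookrightarrow \mathcal L_s(\mathbf H)$, and define $B : \mathbf H \to \mathcal L_s(\mathbf H, -|b|,|b|)$ by $(B(u)v)(x,t) := g(x,t,u(x,t))\,v(x,t)$. The ordering $B_1 \leq B(u) \leq B_2$ and the index conditions $i_A(B_1) = i_A(B_2)$, $\nu_A(B_2) = 0$ are just the transfer of the corresponding assumptions in $(f_2)$. The remaining, and main, point is
\[
\|F'(u) - B(u)u\|_\mathbf H = o(\|u\|_\mathbf H) \quad \text{as } \|u\|_\mathbf H \to \infty.
\]
The last clause of $(f_2)$ gives, for every $\varepsilon > 0$, a constant $M_\varepsilon > 0$ with $|f_b(x,t,s) - g(x,t,s)s| \leq \varepsilon |s|$ whenever $|s| \geq M_\varepsilon$, uniformly in $(x,t) \in \Omega$. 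Splitting $\Omega$ into $\Omega_\varepsilon^+ := \{(x,t) : |u(x,t)| \geq M_\varepsilon\}$ and its complement, and using that $f_b - g \cdot \mathrm{id}$ is bounded on the complement by some $C_\varepsilon$, I get
\[
\|F'(u) - B(u)u\|_\mathbf H \leq \varepsilon \|u\|_\mathbf H + C_\varepsilon\,|\Omega|^{1/2},
\]
which yields $\limsup_{\|u\|_\mathbf H \to \infty} \|F'(u) - B(u)u\|_\mathbf H / \|u\|_\mathbf H \leq \varepsilon$, and hence the desired $o$-estimate since $\varepsilon$ is arbitrary.

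With $(F_1)$ and $(F_2)$ in hand, Theorem \ref{thm-abstract thm 1 for the existence of solution} produces at least one $u \in D(A) \subset \mathbf H$ solving $Au = F'(u)$, i.e.\ a weak solution of (W.E.). The main technical obstacle is precisely the upgrade from the pointwise uniform $o(|s|)$ estimate in $(f_2)$ to the $\mathbf H$-norm estimate needed by the abstract theorem; the $\varepsilon$-splitting above handles it, but one must be careful that $g$ is only required to be bounded (via the embedding into $\mathcal L_s(\mathbf H)$) rather than continuous, so the argument is carried out at the level of $L^2$ integrals rather than pointwise limits.
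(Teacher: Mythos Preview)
Your proposal is correct and follows exactly the same route as the paper: set $A=\Box-b\cdot I$, define $F$ via the primitive of $f_b$, verify $(F_1)$ and $(F_2)$, and invoke Theorem~\ref{thm-abstract thm 1 for the existence of solution}. The paper's own proof simply asserts that the verification of $(F_1)$ and $(F_2)$ from $(f_1)$ and $(f_2)$ is ``easy''; your $\varepsilon$-splitting argument for the $o(\|u\|_{\mathbf H})$ estimate is precisely the standard way to make that step explicit.
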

\noindent{\bf Proof of Theorem \ref{thm-application-1}.} Let
\[
\mathcal{F}_b(x,t,u):=\int^u_0f_b(x,t,s)ds,\;\;\forall(x,t,u)\in\Omega\times\mathbb{R},
\]
and
\begin{equation}\label{eq-definition of F}
F(u):=\int_\Omega \mathcal F_b(x,t,u(x,t))dxdt,\;\;\forall u\in \mathbf H.
\end{equation}
 It is easy to verify that $F$ will satisfies condition ($F_1$) and ($F_2$) if $f$ satisfies condition ($f_1$) and ($f_2$).
 Thus, by Theorem \ref{thm-abstract thm 1 for the existence of solution}, the proof is complete.\endproof

 Here, we give an example of Theorem \ref{thm-application-1}.
\begin{eg}
For any $b\neq 0$, assume $\alpha,\beta\in (-|b|,|b|)$ and
$
[\alpha,\beta]\cap \sigma(\Box-b)=\emptyset.
$
Let
\[
g(x,t,u):=\displaystyle\frac{\beta-\alpha}{2}\sin[\varepsilon_1\ln (|x|+|t|+|u|+1)]+\frac{\alpha+\beta}{2},
\]
and $h\in C(\mathbb{R},\mathbb{R})$ is Lipschitz continuous with
\[
h(u)=o(|u|),\;\;|u|\to\infty.
\]
 then
\[
f(x,t,u):=bu+g(x,t,u)u+\varepsilon_2h(u)
\]
will satisfies condition ($f_1$) and ($f_2$) for $\varepsilon_1$ and $\varepsilon_2>0$ small enough.
\end{eg}

\begin{thm}\label{thm-application-3}
Assume  $T$ is a rational multiple of $\pi$, $f$ satisfies  ($f_1$) and the following condition,\\
($f^\pm_2$) There exists $g_\infty(x,t)\in L^\infty\cap\mathcal{L}_s(\mathbf H, -|b|,|b|)$ with
\[
|f_b(x,t,u)-g_{\infty}(x,t)u|\leq M_{1}\;\;\forall (x,t,u)\in\Omega\times\mathbf R,
\]
and
\begin{equation}\label{eq-condition of r}
\pm(f_b(x,t,u)-g_{\infty}(x,t)u,u)\geq c|u|,\;\;\forall (x,t,u)\in\Omega\times\mathbb R/[-M_{2},M_{2}],
\end{equation}
where $M_1,\;M_2,\;c>0$ are constants.  Then (W.E.) has a weak solution.
\end{thm}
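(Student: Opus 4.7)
My plan is to apply the abstract existence result Theorem \ref{thm-abstract thm 3} to the nonlinear functional
\[
F(u) := \int_\Omega \mathcal F_b(x,t,u(x,t))\,dx\,dt,\qquad \mathcal F_b(x,t,u):=\int_0^u f_b(x,t,s)\,ds,
\]
on $\mathbf H = L^2$ with $A = \Box - b\cdot I \in \mathcal O^0_e(-|b|,|b|)$, exactly as in the proof of Theorem \ref{thm-application-1}. Condition $(f_1)$ transfers pointwise to give $(F_1)$ with the same constant $l_F<|b|$, since
\[
\|F'(z+h)-F'(z)\|_\mathbf H^2=\int_\Omega|f_b(x,t,z+h)-f_b(x,t,z)|^2\,dx\,dt\leq l_F^2\|h\|_\mathbf H^2.
\]

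For $(F_2^\pm)$ I would take $B_\infty$ to be the multiplication operator by $g_\infty\in L^\infty\cap\mathcal L_s(\mathbf H,-|b|,|b|)$ (legitimate by the embedding $L^\infty\hookrightarrow\mathcal L_s(\mathbf H)$), and set
\[
r(z)(x,t):=f_b(x,t,z(x,t))-g_\infty(x,t)z(x,t),
\]
so that $F'(z)=B_\infty z+r(z)$. The pointwise bound from $(f_2^\pm)$ gives the uniform estimate $\|r(z)\|_\mathbf H\leq M_1|\Omega|^{1/2}$. The remaining piece of $(F_2^\pm)$, namely $\pm(r(z),z)_\mathbf H\to+\infty$ as $\|z\|_\mathbf H\to\infty$, is the main step; splitting the integral along $\{|z|\leq M_2\}$ and $\{|z|>M_2\}$ and applying \eqref{eq-condition of r} on the large set yields
\[
\pm(r(z),z)_\mathbf H\geq c\int_{\{|z|>M_2\}}|z|\,dx\,dt - M_1 M_2|\Omega|.
\]

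The main obstacle is that $\int_\Omega|z|$ is \emph{not} controlled by $\|z\|_{L^2}$ alone (an $L^2$-sequence can concentrate on sets of small measure while keeping $\|z\|_{L^1}$ bounded), so $(F_2^\pm)$ does not follow in the strongest form for arbitrary $z$. My resolution is to observe that this coercivity is only invoked in the proof of Theorem \ref{thm-abstract thm 3} along sequences $z_\varepsilon=u_\varepsilon+v_\varepsilon$ for which $v_\varepsilon$ is bounded in $\mathbf H^*_{A-B_\infty}$ and $u_\varepsilon$ lies in the \emph{finite-dimensional} kernel $\mathbf H^0_{A-B_\infty}$. On this finite-dimensional subspace all $L^p$-norms are equivalent, so $\|u_\varepsilon\|_{L^2}\to\infty$ forces $\|u_\varepsilon\|_{L^1}\to\infty$, giving $\int_\Omega|z_\varepsilon|\geq\|u_\varepsilon\|_{L^1}-\|v_\varepsilon\|_{L^1}\to\infty$ and hence $\int_{\{|z_\varepsilon|>M_2\}}|z_\varepsilon|\to\infty$. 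With this refinement the argument of Theorem \ref{thm-abstract thm 3} carries over verbatim, producing $z_\varepsilon$ with $\|z_\varepsilon\|_\mathbf H$ uniformly bounded and then passing to a strong limit, which is the desired weak solution of (W.E.).
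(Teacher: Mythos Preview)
Your proposal is correct and follows essentially the same route as the paper. The paper likewise observes that the pointwise hypothesis \eqref{eq-condition of r} does \emph{not} imply the abstract coercivity $(F^\pm_2)$ for arbitrary $z\in L^2$, and therefore re-enters the proof of Theorem~\ref{thm-abstract thm 3} at the point where \eqref{eq-condition of r in ab-thm 3} is used (namely, to bound the kernel component $u_\varepsilon$ after \eqref{eq-boundedness of z-2} is established); it then invokes exactly your two ingredients---boundedness of $v_\varepsilon$ in $L^2$ (hence in $L^1$, since $\Omega$ has finite measure) and equivalence of $L^1$ and $L^2$ norms on the finite-dimensional space $\ker(A-B_\infty)$---to force $\|z_\varepsilon\|_{L^1}\to\infty$ and obtain the contradiction with \eqref{eq-boundedness of z-3}.
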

\pf We only consider the case of $f^{-}_{2}$. Let $r(x,t,u):=f_b(x,t,u)-g_{\infty}(x,t)u$, then $r$ is bounded in $\mathbf H$. Generally speaking, from \eqref{eq-condition of r}, we cannot prove \eqref{eq-condition of r in ab-thm 3}, so we cannot use Theorem \ref{thm-abstract thm 3} directly. By checking the proof of Theorem \ref{thm-abstract thm 3}, in step 1, when we got \eqref{eq-boundedness of z-2}, \eqref{eq-condition of r in ab-thm 3} was only used to get the boundedness of  $z^0_{\varepsilon}$. Now, with \eqref{eq-condition of r}, we can also get the boundedness of $z^0_{\varepsilon}$ from \eqref{eq-boundedness of z-2}. Recall that $\mathbf H=L^{2}(\Omega)$ in this section, from the boundedness of $z^{\pm}_{\varepsilon}$ in $\mathbf H$, we have the boundedness of $z^{\pm}_{\varepsilon}$ in $L^{1}(\Omega)$. On the other hand, since $\ker (A-g_{\infty})$ is a finite dimensional space, if $\|z^{0}_{\varepsilon}\|_{\mathbf H}\to\infty$, we have $\|z^{0}_{\varepsilon}\|_{L^{1}}\to\infty$, thus $\|z_{\varepsilon}\|_{L^{1}}\to\infty$. Therefor, we have the contradiction from \eqref{eq-boundedness of z-2} and \eqref{eq-condition of r}. So we have gotten the boundedness of $z^0_{\varepsilon}$. The rest part of the proof  is similar to Theorem \ref{thm-abstract thm 3}, we omit it here.

\begin{eg}
Here we give an example of Theorem \ref{thm-abstract thm 3}. For any $b\neq 0$, and $g_\infty\in C(\Omega)$ with
\[
\|g_\infty\|_{C(\Omega)}<|b|.
\]
Let $r(u)=\varepsilon\arctan u$, then
\[
f(x,t,u):=bu+g_\infty(x,t)u\pm r(u)
\]
will satisfies the conditions in Theorem \ref{thm-abstract thm 3} for $\varepsilon>0$ small enough.
\end{eg}

Now, in order to use Theorem \ref{thm-abstract thm 2 for the multiplicity of solutions}, we assume $f$ satisfies the following conditions.

\noindent ($f^{\pm}_3$) There exists $g_3(x,t)\in L^\infty\cap\mathcal{L}_s(\mathbf H, -|b|,|b|)$,  with
\[
\pm g_3(x,t)>\max\{\lambda|\lambda\in \sigma_{(\pm A)}\cap (-\infty,l_F)\},
\]
such that
\[
\pm\mathcal{F}_b(x,t,u)\geq \frac{1}{2}(g_3(x,t)u,u)+c,\;\;\forall (x,t,u)\in\Omega\times\mathbb{R},
\]
for some $c\in\mathbb{R}$.

\noindent ($f^\pm_4$) $f\in C^1(\Omega\times \mathbb{R},\mathbb{R})$, $f(x,t,0)\equiv 0,\;\forall(x,t)\in\Omega$ and
\[
g_0(x,t):=f'_b(x,t,u),\;\;\forall (x,u)\in\Omega,
\]
with
\[
\pm (i_A(g_0)+\nu_A(g_0))<\pm i_A(g_3).
\]
We have the following result.
\begin{thm}\label{thm-application-2}
Assume  $T$ is a rational multiple of $\pi$. \\
(A.)If $f$ satisfies condition ($f_1$), ($f^+_3$)( or ($f^-_3$)), then  (W.E.) has at least one solution.\\
(B.) Further more, if $f$ satisfies condition ($f^+_4$)( or ($f^-_4$)), then (W.E.) has at least one nontrivial solution. Additionally, if $\nu_A(g_0)=0$, then (W.E.) has at least two nontrivial solutions.
\end{thm}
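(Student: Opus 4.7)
\noindent\textbf{Proof proposal for Theorem \ref{thm-application-2}.} The strategy is to reduce Theorem \ref{thm-application-2} to the abstract Theorem \ref{thm-abstract thm 2 for the multiplicity of solutions} by translating each hypothesis on $f$ into the corresponding hypothesis on the functional $F$ defined exactly as in \eqref{eq-definition of F}. Concretely, set $\mathbf H:=L^2(\Omega)$, $A:=\Box-bI$, and
\[
F(u):=\int_\Omega \mathcal F_b(x,t,u(x,t))\,dx\,dt,\qquad \mathcal F_b(x,t,u):=\int_0^u f_b(x,t,s)\,ds.
\]
Then $F'(u)$ is the Nemytski operator $u\mapsto f_b(\cdot,\cdot,u(\cdot,\cdot))$, and $A\in\mathcal O^0_e(-|b|,|b|)$ because $\sigma_e(\Box)=\{0\}$. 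Hypothesis $(f_1)$ immediately yields $(F_1)$ with the same Lipschitz constant $l_F$, since for $u,h\in\mathbf H$ one has $\|F'(u+h)-F'(u)\|_\mathbf H^2=\int_\Omega|f_b(x,t,u+h)-f_b(x,t,u)|^2\,dx\,dt\le l_F^2\|h\|_\mathbf H^2$.

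For part (A), I first need to arrange $\pm l_F\notin\sigma(A)$. Because $\sigma(A)\setminus\{-b\}$ is a discrete countable subset of $\mathbb R$ whose only accumulation point in $(-|b|,|b|)$ is forbidden (the essential spectrum lies outside this interval), I can replace $l_F$ by any slightly larger value still below $|b|$ to avoid $\sigma(A)$ without disturbing $(f_1)$. Next, integrating the pointwise inequality in $(f_3^\pm)$ over $\Omega$ turns the multiplication operator $g_3\in L^\infty\cap\mathcal L_s(\mathbf H,-|b|,|b|)$ into the bounded self-adjoint operator $B_3:=g_3$ on $\mathbf H$, and yields
\[
\pm F(u)\ge \tfrac12(B_3 u,u)_\mathbf H+c\,|\Omega|,\qquad\forall u\in\mathbf H,
\]
which is exactly $(F_3^\pm)$; moreover, the hypothesis $\pm g_3>\max\{\lambda\in\sigma(\pm A)\cap(-\infty,l_F)\}$ on the multiplication operator is precisely the spectral gap condition required in $(F_3^\pm)$. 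The first half of Theorem \ref{thm-abstract thm 2 for the multiplicity of solutions} then gives a solution of (O.E.), which is a weak solution of (W.E.) by definition of $F$.

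For part (B), I need to verify $(F_4^\pm)$. Under $(f_4^\pm)$ the Nemytski operator $F'$ has Gateaux derivative equal to multiplication by $f'_b(\cdot,\cdot,u)\in L^\infty$, so $F\in C^2(\mathbf H,\mathbb R)$ with $F''(0)$ given by multiplication by $g_0$. Since $f(x,t,0)\equiv 0$ forces $F'(0)=0$, a pointwise Taylor expansion of $f_b$ at $u=0$ combined with dominated convergence yields
\[
F'(z)=g_0 z + o(\|z\|_\mathbf H)\quad\text{as } \|z\|_\mathbf H\to 0,
\]
giving $(F_4^\pm)$ with $B_0:=g_0$; the index inequality $\pm(i_A(B_0)+\nu_A(B_0))<\pm i_A(B_3)$ is inherited verbatim. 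Applying the second and third parts of Theorem \ref{thm-abstract thm 2 for the multiplicity of solutions} then produces one nontrivial solution, and two nontrivial solutions when $\nu_A(g_0)=0$.

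The main technical obstacle I foresee is the justification of $F\in C^2(\mathbf H,\mathbb R)$ and the little-$o$ asymptotics at zero. For a pure $L^2$ theory the Nemytski map is in general only Gateaux differentiable; one needs the uniform $L^\infty$ bound on $f'_b$ that $(f_1)$ guarantees (via the Lipschitz estimate) to upgrade this to Fr\'echet differentiability and to obtain the uniform remainder estimate. Once this standard verification is carried out, the translation to the abstract setting is routine, and the topological / Morse-theoretic mechanism is entirely supplied by Theorem \ref{thm-abstract thm 2 for the multiplicity of solutions}.
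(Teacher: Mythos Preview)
Your proposal is correct and follows essentially the same route as the paper: both reduce Theorem~\ref{thm-application-2} to the abstract Theorem~\ref{thm-abstract thm 2 for the multiplicity of solutions} by checking that $(f_1)$, $(f_3^\pm)$, $(f_4^\pm)$ translate into $(F_1)$, $(F_3^\pm)$, $(F_4^\pm)$ for the functional $F$ in~\eqref{eq-definition of F}. The paper's written proof singles out exactly the point you flag as the main technical obstacle---namely, that $F\in C^2(\mathbf H,\mathbb R)$---and handles it via the bound $|f'_b|\le l_F$ coming from $(f_1)$ together with dominated convergence, just as you outline; your treatment is slightly more explicit about the side condition $\pm l_F\notin\sigma(A)$ and the integration of $(f_3^\pm)$, but the argument is the same.
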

The proof is to verify the conditions in Theorem \ref{thm-abstract thm 2 for the multiplicity of solutions}, we only verify the smoothness of $F(u)$ defined in \eqref{eq-definition of F}. From condition ($f_1$) and $f\in C^1(\Omega\times\mathbb{R})$, we have the derivative $f'_b(x,t,u)$ of $f_b$ with respect to $u$, satisfying
\begin{equation}\label{eq-the boundedness of f'_b}
|f'_b(x,t,u)|\leq l_F,\;\;\forall (x,t,u)\in\Omega\times\mathbb{R}.
\end{equation}
For any $u,v\in\mathbf{H}$,
\begin{align*}
F'(u+v)-F'(u)&=f_b(x,t,u+v)-f_b(x,t,u)\\
             &=f'_b(x,t,u)v+(f'_b(u+\xi v)-f'_b(u))v.
\end{align*}
From \eqref{eq-the boundedness of f'_b}, we have $f'_b(u+\xi v)-f'_b(u)\in \mathbf{H}$ and
\[
\displaystyle\lim_{\|v\|_\mathbf{H}\to 0}\|f'_b(u+\xi v)-f'_b(u)\|_\mathbf{H}=0,\;\;\forall u\in\mathbf{H}.
\]
That is to say $F''(u)=f'_b(x,t,u)$ and $F\in C^2(\mathbf{H},\mathbb{R})$.

\begin{eg}
In order to give an example for Theorem \ref{thm-application-2}, assume
\begin{equation}\label{eq-spectrum of Box}
\sigma(\Box)=\mathop{\cup}\limits_{n\in\mathbb{Z}}\{\lambda_n\},
\end{equation}
with $\lambda_0=0$ and $\lambda_n<\lambda_{n+1}$ for all $n\in\mathbb{Z}$. Choose any $k\in\{2,3\cdots\}$. Let
\[
g_0(x,t)\in C(\Omega,[\alpha,\beta]),\;{\rm with}\;[\alpha,\beta]\in(0,\lambda_k),
\]
and $h\in C(\mathbb{R},\mathbb{R})$ defined above. Define
\[
g(x,t,u):=g_0(x,t)+\displaystyle(\lambda_k-g_0(x,t)-\varepsilon_1)\frac{2}{\pi}\arctan(\varepsilon_1u^2),
\]
then
\[
f(x,t,u):=g(x,t,u)u+\varepsilon_2 h(u)
\]
will satisfies condition ($f_1$) and ($f^+_3$) with $b=\frac{\lambda_{k}}{2}$ and $\varepsilon_1,\varepsilon_2>0$ small enough.
Further more, if $g_0,h$ are $ C^1$ continuous and $\beta<\lambda_{k-1}$, we have condition ($f^+_4$) is satisfied.
Additionally, if   $[\alpha,\beta]\cap\sigma(\Box)=\emptyset$, then $\nu_A(g_0)=0$.
\end{eg}
%\begin{eg}
%These condition should be verified to that
%\[
%F(u)=\frac{b}{2}u^2+\frac{1}{2}g_3(x,t)u^2+c(u),
%\]
%with $c(u)$ is a periodic function with $c''(u)$ satisfying some special condition.
%\end{eg}

\begin{rem}
We can also use Theorem \ref{thm-abstract thm 1 for the existence of solution} , Theorem \ref{thm-abstract thm 3} and Theorem \ref{thm-abstract thm 2 for the multiplicity of solutions} to consider the  radially symmetric solutions for the $n$-dimensional wave equation:
\[
\left\{\begin{array}{ll}
       \Box u\equiv u_{tt}-\vartriangle_x u=h(x,t,u), &t\in\mathbb{R},\;x\in B_R,\\
                  u(x,t)= 0, t\in\mathbb{R},        &t\in\mathbb{R},\;x\in \partial B_R,\\
                         u(x,t+T)=u(x,t),           &t\in\mathbb{R},\;x\in B_R,\\
       \end{array}
\right. \eqno(n \textendash W.E.)
\]
where $B_R=\{x\in\mathbb{R}^n,|x|<R\}$, $\partial B_R=\{x\in\mathbb{R}^n,|x|=R\}$, $n>1$ { and the nonlinear term $h$ is $T$-periodic in variable $t$}.
Restriction of the radially symmetry allows us to know the nature of spectrum of the wave operator.
Let $r=|x|$ and {$S^1:=\mathbb{R}/T$, if $h(x,t,u)=h(r,t,u)$} then the $n$-dimensional wave equation ($n$\textendash W.E.) can be transformed into:
\[
 \left\{\begin{array}{ll}
        A_0u:=u_{tt}-u_{rr}-\frac{n-1}{r}u_r=h(r,t,u),\\
         u(R,t)=0,\;                                     \\
         u(r,0)=u(r,T),\;u_t(r,0)=u_t(r,T),
        \end{array}
\right. \;\;\;(r,t)\in{\Omega:=[0,R]\times S^1}.\eqno(RS\textendash W.E.)%RS means radially symmetry
\]
$A_0$ is symmetric on $L^2(\Omega,\rho)$, where $\rho=r^{n-1}$ and
\[
 L^2(\Omega,\rho):=\left\{u|\|u\|^2_{L^2(\Omega,\rho)}:=\int_\Omega|u(t,r)|^2r^{n-1}dtdr<\infty\right\}.
\]
 By the asymptotic properties
of the Bessel functions (see\cite{Watson-1952}), the spectrum of the wave operator can be characterized (see\cite[Theorem 2.1]{Schechter-1998}).
Under some more assumption, the self-adjoint extension of $A_0$ has no  essential spectrum, and we can get more solutions of (RS\textendash W.E.).
\end{rem}

 \end{document}